\documentclass[12pt]{amsart}

\usepackage[utf8]{inputenc}
\usepackage[T1]{fontenc}
\usepackage[english]{babel}

\usepackage{amsfonts}
\usepackage{amsmath}
\usepackage{amssymb}
\usepackage{amstext}
\usepackage{amsthm}

\usepackage{geometry}
\usepackage{fullpage}

\usepackage[shortlabels]{enumitem}
\usepackage{graphicx}
\usepackage{hyperref}
\usepackage{mathrsfs}
\usepackage{mathtools}

\allowdisplaybreaks

\newcommand{\wtF}{\widetilde{F}}
\newcommand{\wtH}{\widetilde{H}}
\newcommand{\wtm}{\widetilde{m}}
\newcommand{\N}{\mathbb{N}}
\newcommand{\Z}{\mathbb{Z}}
\newcommand{\Q}{\mathbb{Q}}
\DeclareMathOperator{\Cyc}{Cyc}

\newtheorem{theorem}{Theorem}[section]
\newtheorem{lemma}[theorem]{Lemma}
\newtheorem{proposition}[theorem]{Proposition}

\newtheorem{conjecture}[theorem]{Conjecture}

\theoremstyle{definition}

\newtheorem{example}[theorem]{Example}

\theoremstyle{remark}
\newtheorem{remark}[theorem]{Remark}

\theoremstyle{plain}
\newtheorem*{claim}{Claim}

\title{The Schur positivity of $\nabla m_\mu$}

\author{Dun Qiu}
\address{Center for Combinatorics, LPMC, Nankai University, Tianjin 300071, P. R. China}
\email{qiudun@nankai.edu.cn}
\thanks{Dun Qiu is supported in part by the Fundamental Research Funds for the Central Universities (105-63263093), the National Natural Science Foundation of China (12271023 and 12171034),
	and the Natural Science Foundation of Tianjin (24JCZDJC01390). We thank Jaeseong Oh for his valuable comments on an initial version of this paper.}
\author{Minhao Zhang}
\address{Center for Combinatorics, LPMC, Nankai University, Tianjin 300071, P. R. China}
\email{carpentercheungmath@gmail.com}

\date{\today}

\subjclass[2020]{
	05E05, 
	05E10, 
	20C30
}

\keywords{symmetric functions, Schur positivity, parking functions, LLT polynomials, Macdonald polynomials, diagonal harmonics}

\begin{document}

\begin{abstract}
	Bergeron, Garsia, Haiman and Tesler conjectured in 1999 that, for all partitions
	$\mu,\lambda\vdash n$, the polynomial
	$(-1)^{|\mu|-\ell(\mu)}\langle \nabla m_\mu, s_\lambda\rangle$ has nonnegative
	integer coefficients, where $\nabla$ is the Bergeron--Garsia nabla operator,
	which acts diagonally on the modified Macdonald basis, and
	$m_\mu$ is the monomial symmetric function. In this article, we prove this conjecture, and more
	generally that
	$(-1)^{|\mu|-\ell(\mu)}\langle\nabla^r m_\mu,s_\lambda\rangle\in\N[q,t]$
	for all $r\geq 1$. 
	We establish a recursion showing that $(-1)^{|\mu|-\ell(\mu)}m_\mu$
	has an expansion with coefficients in $\Q_{\geq 0}[q]$
	in the symmetric functions $C_\alpha(1)$, where $C_a$ denotes the
	operator introduced by Haglund, Morse and Zabrocki.
	Combining this
	expansion with the compositional shuffle theorems of Carlsson--Mellit and
	Mellit, and with the Schur positivity of LLT polynomials, completes the proof.
	The same method, using the $e$-positivity of column LLT polynomials after the
	substitution $q\mapsto q+1$, also gives an $e$-positive analogue.
\end{abstract}

\maketitle

\section{Introduction}
In 1988, Macdonald introduced a two-parameter basis of the ring of symmetric
functions, now known as the Macdonald polynomials \cite{Mac88}. Garsia and
Haiman introduced the modified Macdonald polynomials
$\wtH_\mu[X;q,t]$ and constructed the Garsia--Haiman modules, a
family of bigraded $S_n$-modules, conjecturing that their Frobenius
characteristics are the modified Macdonald polynomials
\cite{GH93,GH96b}. This conjectural representation-theoretic interpretation
was proved by Haiman through his work on Hilbert schemes and the
$n!$-theorem \cite{Hai01}.

Garsia and Haiman also introduced the space of diagonal harmonics and
conjectured that its bigraded Frobenius characteristic is
$\nabla e_n$ \cite{GH96}, where $\nabla$ is the Bergeron--Garsia
nabla operator introduced in \cite{BG99}.
Haiman proved the diagonal harmonics conjecture in \cite{Hai02}.
Consequently,
\[
\langle \nabla e_n, s_\lambda\rangle \in \N[q,t]
\]
for every partition $\lambda\vdash n$.

In 2005, Haglund, Haiman, Loehr, Remmel and Ulyanov conjectured a positive
combinatorial LLT polynomial expansion for $\nabla e_n$ \cite{HHLRU05}.
Haglund, Morse and Zabrocki introduced the operators $C_a$ and formulated a
compositional refinement of this conjecture \cite{HMZ12}, the compositional
shuffle theorem, proved by Carlsson and Mellit \cite{CM18}. Gorsky and Negut
\cite{GorskyNegut15} and Bergeron, Garsia, Leven and Xin \cite{BGLX16}
generalized it to the rational $(km,kn)$-setting, proved by Mellit
\cite{Mel21}. For recent developments, see
\cite{BlasiakHaimanMorsePunSeelinger24,BlasiakHaimanMorsePunSeelinger25b,
BlasiakHaimanMorsePunSeelinger23b,KimOh26,KimLeeOh25,KimOh24}. Combining the
resulting compositional $(km,kn)$-shuffle theorem with the Schur positivity of LLT
polynomials proved by Grojnowski and Haiman \cite{GrojnowskiHaiman07}, one
obtains
\[
\langle \nabla^r C_\alpha(1), s_\lambda\rangle \in \N[q,t]
\]
for every composition $\alpha$, partition $\lambda$, and integer
$r\ge 1$. Since we could not locate this exact statement for $r\geq 2$
in the literature, we give a proof in Proposition~\ref{propnablaC}.

The present paper concerns the signed Schur positivity of $\nabla m_\mu$.
Bergeron, Garsia, Haiman and Tesler made the following conjecture.

\begin{conjecture}[\cite{BGHT99}, Conjecture IV]\label{conj:BGHT-IV}
	For any partitions $\mu,\lambda\vdash n$,
	\[
	(-1)^{|\mu|-\ell(\mu)}
	\left\langle \nabla m_\mu, s_\lambda \right\rangle
	\in \N[q,t].
	\]
\end{conjecture}

When $\mu=(1^n)$, we have $m_{(1^n)}=e_n$, so this case follows from
Haiman's theorem on diagonal harmonics \cite{Hai02}. The case
$\mu=(n)$, where $m_{(n)}=p_n$, is related to the square paths
theorem, conjectured by Loehr and Warrington \cite{LW07} and proved by
Sergel \cite{Ser17}. Sergel also proved the cases where $\mu$ is a hook, and
conjectured a more general combinatorial model for $\nabla m_\mu$ in
\cite{Ser18}. More recently, Qu and Xin proved the cases
$\mu=(2^k,1^\ell)$ in \cite{QX25}, by establishing a recursion which
implies that $(-1)^{k}m_{(2^k,1^\ell)}$ has a positive expansion in the
symmetric functions $C_\alpha(1)$. Recursive arguments of this type originate
from the calculus of the operators $C_a$ introduced by Haglund, Morse
and Zabrocki \cite{HMZ12}. This is the same calculus that underlies the
compositional shuffle theorem of Carlsson and Mellit \cite{CM18} and its
$(km,kn)$-extension, conjectured by Bergeron, Garsia, Leven and
Xin \cite{BGLX16} and proved by Mellit \cite{Mel21}. The present paper extends this
recursive approach to arbitrary partitions; all operator identities we need
are derived from the definition of the $C_a$ and the basic identities of
Haglund, Morse and Zabrocki \cite{HMZ12} recalled in Section~2.

Bergeron, Garsia, Haiman and Tesler also proved the following integrality
statement in \cite{BGHT99}:
\[
(-1)^{|\mu|-\ell(\mu)}
\left\langle \nabla m_\mu, s_\lambda \right\rangle
\in \Z[q,t].
\]
In \cite{BlasiakHaimanMorsePunSeelinger25}, Blasiak, Haiman, Morse, Pun and
Seelinger proved the Loehr--Warrington conjecture \cite{LW08} by giving an
LLT expansion of $\nabla^r s_\lambda$ with integral coefficients.
Combining this with the Schur positivity of LLT polynomials due to
Grojnowski and Haiman \cite{GrojnowskiHaiman07} and the integrality of the
inverse Kostka matrix established by E\u{g}ecio\u{g}lu and Remmel
\cite{ER90}, one obtains
\begin{equation}\label{nablamZ}
	(-1)^{|\mu|-\ell(\mu)}
	\left\langle \nabla^r m_\mu, s_\lambda \right\rangle
	\in \Z[q,t]
	\qquad\text{for all } r\geq 1.
\end{equation}

The main result of this paper is the following positivity theorem.

\begin{theorem}\label{mainthm}
	For any partitions $\mu,\lambda\vdash n$ and any integer $r\ge 1$,
	\[
	(-1)^{|\mu|-\ell(\mu)}
	\langle \nabla^r m_\mu, s_\lambda\rangle \in \N[q,t].
	\]
\end{theorem}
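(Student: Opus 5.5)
The plan follows the route outlined in the abstract. First I establish the positivity of $\nabla^r C_\alpha(1)$ (Proposition~\ref{propnablaC}). Then I show that the signed monomial function $(-1)^{|\mu|-\ell(\mu)}m_\mu$ lies in the $\Q_{\geq 0}[q]$-span of the functions $C_\alpha(1)$, and conclude by linearity.

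Write $C_\alpha=C_{\alpha_1}\cdots C_{\alpha_\ell}$. For the first step, the compositional $(km,kn)$-shuffle theorem of Mellit, specialized to the slope corresponding to $\nabla^r$, expresses $\nabla^r C_\alpha(1)$, possibly up to a sign and monomial normalization that I would track, as a sum over $(r)$-Dyck paths with touch composition $\alpha$. Each summand is a product of powers of $q,t$ and an LLT polynomial. Grojnowski--Haiman Schur positivity of LLT polynomials then gives $\langle \nabla^r C_\alpha(1), s_\lambda\rangle\in\N[q,t]$.

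The core step is to prove, by induction on $n$ and then on $\ell(\mu)$ or dominance order, an identity
\[
(-1)^{|\mu|-\ell(\mu)} m_\mu=\sum_{\alpha\models n} c_{\mu,\alpha}(q)\, C_\alpha(1),\qquad c_{\mu,\alpha}(q)\in\Q_{\geq 0}[q].
\]
I would start from the HMZ identities: the plethystic formula $C_a P[X]=(-1/q)^{a-1}P[X-(1-1/q)/z]\,\Omega[zX]\big|_{z^a}$, the commutation relations $C_aC_b$ versus $C_{b+1}C_{a-1}$, and the fact that $\sum_{\alpha\models n} C_\alpha(1)=e_n$. The next step is to find how multiplying by a power sum, or adjoining a new part $k$ to $\mu$, acts on the $C$-basis. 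Concretely, I aim for a recursion expressing $m_{\mu\cup k}$ via $C_k$ applied to $m_\mu$ plus lower corrections, generalizing the Qu--Xin recursion for $(2^k,1^\ell)$ and Sergel's hook case. The $\Omega[zX]$ factor suggests pulling off one row of a monomial via the generating function $\sum_k m_{\ldots}$ in an auxiliary variable.

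The main obstacle is the middle step: finding a recursion whose coefficients are manifestly nonnegative after the sign $(-1)^{|\mu|-\ell(\mu)}$ is absorbed. The naive recursion will have alternating terms. I would try first to organize $m_\mu$ through its augmented version $\wtm_\mu$, i.e.\ $m_\mu$ times the product of the factorials of its multiplicities, which explains the rational coefficients. I would then look for an operator identity of the form $\wtm_{\mu\cup k}=\sum_j (\text{positive in } q)\,C_{\cdot}\cdots$ applied to $\wtm$'s of partitions with fewer parts. The induction closes once each sign $(-1)^{k-1}$ from $C_k$ is matched by the factor $(-1)^{k-1}$ contributed by the new part.

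Finally, applying $\nabla^r$ to the expansion and pairing with $s_\lambda$ gives $(-1)^{|\mu|-\ell(\mu)}\langle\nabla^r m_\mu,s_\lambda\rangle\in\Q_{\geq0}[q,t]$. Intersecting with the integrality \eqref{nablamZ} yields membership in $\N[q,t]$.
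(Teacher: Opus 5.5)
\textbf{There is a genuine gap: the core step is only a search plan.}

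Your outer framework is the same as the paper's:
\begin{itemize}
\item Schur positivity of $\nabla^r C_\alpha(1)$, from Mellit's compositional shuffle theorem (for $(rn,n)$ the sign is $(-1)^{2n}=1$, so nothing needs tracking) together with Grojnowski--Haiman;
\item a $\Q_{\ge0}[q]$-expansion of $(-1)^{|\mu|-\ell(\mu)}m_\mu$ in the $C_\alpha(1)$, obtained through the augmented functions $\wtF_\mu=(-1)^{|\mu|-\ell(\mu)}\wtm_\mu$;
\item the integrality \eqref{nablamZ}, to pass from $\Q_{\ge0}[q,t]$ to $\N[q,t]$.
\end{itemize}
The middle step is the whole content of the theorem, and you do not carry it out. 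Your heuristic for closing the induction, matching the sign of $C_k$ with that of the new part, is not where the difficulty lies. Signs are harmless: directly from the definition, $F_n=\sum_{j=1}^{n-1}q^{j-1}C_j(F_{n-j})+[n]_qC_n(1)$.

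The real obstruction is negative coefficients in $q$. Commuting gives $F_aC_i(G)=C_i(F_aG)+(1-q^a)C_{a+i}(G)$. Fed through the E\u{g}ecio\u{g}lu--Remmel set-partition expansion $\wtF_B=\sum_{\pi\in\Pi(B)}\prod_{D\in\pi}(|D|-1)!\,F_{S_D}$, this yields $\wtF_B\,C_r(G)=\sum_{U\subseteq B}K_U\,C_{r+S_U}(\wtF_{B\setminus U}G)$ with $K_U=|U|!-(|U|-1)!\sum_{x\in U}q^x$. Rewriting the products $\wtF_{A\setminus T}F_b$ back in terms of $\wtF$'s then introduces further negative terms, namely the merged parts $\wtF_{(A\setminus(T\cup\{x\}))\cup\{x+b\}}$. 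Your proposal gives no mechanism for cancelling either source of negativity.

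\textbf{The missing idea is the grouping that makes everything cancel.} The paper adjoins a \emph{minimal} part $c$ (so $x\ge c$ for all $x\in A$) and sets
\[
\Theta_{A,c}=\wtF_{A\cup\{c\}}-\sum_{j=1}^{c-1}q^{j-1}C_j\bigl(\wtF_{A\cup\{c-j\}}\bigr).
\]
It then proves
\[
\Theta_{A,c}=\sum_{T\subseteq A}\Bigl(\sum_{b=1}^{S_T}\alpha_{T,b}\,C_{c+S_T-b}\bigl(\wtF_{(A\setminus T)\cup\{b\}}\bigr)+\beta_T\,C_{c+S_T}\bigl(\wtF_{A\setminus T}\bigr)\Bigr),
\]
with $\alpha_{T,b}=(|T|-1)!\sum_{x\in T,\,b\le x}q^{(c+x-b-1)\bmod x}$ and $\beta_T=(|T|+1)!\,[c]_q+|T|!\sum_{x\in T}(q^c+\cdots+q^{x-1})$, both in $\N[q]$.

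Minimality of $c$ is used at every stage:
\begin{itemize}
\item it ensures the terms with $b>S_T$ come only from $S=T$, so they reassemble exactly into the subtracted sum;
\item it fixes the low-degree coefficients of $\beta_T$;
\item it makes the coefficients left after the merged-part re-expansion telescope to $\alpha_{T,b}$, via $x<b\Rightarrow c\le b$ and the rewriting of exponents as residues mod $x$.
\end{itemize}
The correction terms $\wtF_{A\cup\{c-j\}}$ have as many parts as $\wtF_{A\cup\{c\}}$ but a smaller minimum. The induction therefore runs on $(\ell(A),\min(A))$ lexicographically, not on $n$ and $\ell(\mu)$ or dominance.

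Without this recursion, or some substitute producing coefficients in $\N[q]$, your argument does not establish the $C$-expansion and so does not prove the theorem.
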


In particular, Theorem~\ref{mainthm} resolves Conjecture~\ref{conj:BGHT-IV}
in full generality.
For a labelled multiset $A$ with $\mu(A)=\mu$, the symmetric function
$\wtF_A$ is a positive integer multiple of $(-1)^{|\mu|-\ell(\mu)}m_\mu$.
We prove that it admits a $C$ expansion, an expansion in the symmetric
functions $C_\alpha(1)$, with coefficients in $\N[q]$ (Theorem~\ref{thm2});
this expansion is produced by a new recursion for the $\wtF_A$
(Theorem~\ref{thm1}). By the Schur positivity of $\nabla^r C_\alpha(1)$
(Proposition~\ref{propnablaC}), it follows that
$(-1)^{|\mu|-\ell(\mu)}\langle \nabla^r m_\mu, s_\lambda\rangle$ has
nonnegative rational coefficients. Combined with the integrality
statement \eqref{nablamZ}, this yields Theorem~\ref{mainthm}.

The same method yields an $e$-positive analogue.  By the integrality
\eqref{nablamZ}, together with the integral transition between the Schur and
elementary symmetric functions and the fact that the substitution
$q\mapsto q+1$ preserves $\Z[q,t]$, we have
\begin{equation}\label{nablamZe}
	(-1)^{|\mu|-\ell(\mu)}
	\left\langle (\nabla^r m_\mu)[X;q+1], \omega(m_\lambda) \right\rangle
	\in \Z[q,t]
	\qquad\text{for all } r\geq 1 .
\end{equation}
Here and below, for $f\in\Lambda$, the notation $f[X;q+1]$ means that the
parameter $q$ in $f$ is replaced by $q+1$.  An explicit $e$-positive
expansion of column LLT polynomials after this substitution was conjectured
independently by Alexandersson \cite{Alexandersson20} and by Garsia,
Haglund, Qiu and Romero \cite{GHQR19}; the $e$-positivity was proved by
D'Adderio \cite{DAdderio19}, and the conjectured expansion was established by
Alexandersson and Sulzgruber \cite{AlexanderssonSulzgruber22}.  Since $\nabla^r C_\alpha(1)$ is a nonnegative integer combination of
column LLT polynomials by Mellit's compositional $(km,kn)$-shuffle theorem \cite{Mel21},
D'Adderio's $e$-positivity yields the following companion to
Theorem~\ref{mainthm}.

\begin{theorem}\label{maine}
	For any partitions $\mu,\lambda\vdash n$ and any integer $r\ge 1$,
	\[
	(-1)^{|\mu|-\ell(\mu)}
	\left\langle (\nabla^r m_\mu)[X;q+1], \omega(m_\lambda)\right\rangle
	\in \N[q,t].
	\]
\end{theorem}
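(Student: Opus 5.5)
Theorem~\ref{maine} follows the same route as Theorem~\ref{mainthm}, with Schur positivity of LLT polynomials replaced by $e$-positivity of unicellular/column LLT polynomials after $q\mapsto q+1$.

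First, fix a labelled multiset $A$ with $\mu(A)=\mu$. By Theorem~\ref{thm2}, $\wtF_A = c_A(-1)^{|\mu|-\ell(\mu)}m_\mu$ for some positive integer $c_A$. Moreover, $\wtF_A=\sum_\alpha a_\alpha(q)\,C_\alpha(1)$ with $a_\alpha(q)\in\N[q]$. Applying $\nabla^r$ gives
\[
c_A(-1)^{|\mu|-\ell(\mu)}\nabla^r m_\mu=\sum_\alpha a_\alpha(q)\,\nabla^r C_\alpha(1).
\]

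Next, the proof of Proposition~\ref{propnablaC}, via Mellit's compositional $(km,kn)$-shuffle theorem, expresses each $\nabla^r C_\alpha(1)$ as a sum over suitable lattice paths and their labellings. Each summand is a monomial $q^{a}t^{b}$ times a column LLT polynomial $\chi_{\nu}[X;q]$. Here I take care that the power of $q$ is a genuine nonnegative power and that the LLT tuple consists of single columns, or is a vertical-strip LLT in Alexandersson's convention. I then substitute $q\mapsto q+1$ throughout. The coefficients $a_\alpha(q+1)$ and $(q+1)^{a}t^b$ remain in $\N[q,t]$. By D'Adderio's theorem \cite{DAdderio19}, each $\chi_\nu[X;q+1]$ is $e$-positive with coefficients in $\N[q]$. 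Hence $\langle \sum_\alpha a_\alpha(q+1)(\nabla^r C_\alpha(1))[X;q+1],\omega(m_\lambda)\rangle$ lies in $\N[q,t]$, because $\langle e_\nu,\omega(m_\lambda)\rangle=\langle h_\nu,m_\lambda\rangle=\delta_{\nu\lambda}$ extracts the $e_\lambda$-coefficient. Dividing by $c_A$ shows that $(-1)^{|\mu|-\ell(\mu)}\langle(\nabla^r m_\mu)[X;q+1],\omega(m_\lambda)\rangle$ has nonnegative rational coefficients.

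Finally, \eqref{nablamZe} says the same quantity lies in $\Z[q,t]$. A polynomial with integer coefficients that are all nonnegative rationals lies in $\N[q,t]$, which proves the theorem.

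The main obstacle is the second step: checking that the LLT polynomials produced by the compositional $(km,kn)$-shuffle theorem for $\nabla^r C_\alpha(1)$ are exactly of the column (vertical-strip) type to which D'Adderio's result applies. We also need the $q$-statistic to be attached purely as the LLT inversion parameter, so that $q\mapsto q+1$ does not introduce negative terms. I would settle this by writing Mellit's formula in the form of a sum over Dyck-type paths of $t^{\mathrm{area}}q^{\mathrm{dinv}'}$ times an LLT product indexed by vertical runs, which are single columns. I would then note that the only other $q$-dependence, the $a_\alpha(q)$ and the $\mathrm{dinv}$ corrections, consists of honest monomials.
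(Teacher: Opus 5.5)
Your proposal is correct and follows essentially the same route as the paper: the $\N[q]$-coefficient $C$ expansion of $\wtF_\mu$ from Theorem~\ref{thm2}, the substitution $q\mapsto q+1$ (which keeps the coefficients $a_\alpha(q+1)$ nonnegative), $e$-positivity of each $(\nabla^r C_\alpha(1))[X;q+1]$, and then the integrality \eqref{nablamZe} to pass from $\Q_{\ge0}[q,t]$ to $\N[q,t]$. The point you flag as the main obstacle is exactly the second assertion of Proposition~\ref{propnablaC}, which expresses $\nabla^r C_\alpha(1)$ as an $\N[q,t]$-combination of column LLT polynomials to which D'Adderio's theorem applies, so you can cite it directly instead of re-deriving it.
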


The paper is organized as follows. In Section~2, we recall the
$C$ operators and their basic properties, and establish two positivity
properties of $\nabla^r C_\alpha(1)$: Schur positivity, and $e$-positivity
after the substitution $q\mapsto q+1$ (Proposition~\ref{propnablaC}). In Section~3,
we introduce labelled multisets and the symmetric functions $\widetilde
F_A$, and establish several auxiliary lemmas. In Section~4, we establish
a recursion for $\wtF_A$ (Theorem~\ref{thm1}), derive from it a
$C$ expansion with coefficients in $\N[q]$ (Theorem~\ref{thm2}),
and use this expansion to prove Theorems~\ref{mainthm} and~\ref{maine}.

\section{Background}
In this section, we recall the $C$ operators introduced by Haglund, Morse
and Zabrocki \cite{HMZ12}, and fix notation used throughout the paper. We
follow the notation of Macdonald \cite{Mac95} and Haglund \cite{Hag08}.

\subsection{Symmetric functions}
A partition is a sequence of positive integers
$\lambda=(\lambda_1,\ldots,\lambda_\ell)$ with
$\lambda_1\geq \lambda_2\geq \cdots \geq \lambda_\ell$.
If $|\lambda|=\lambda_1+\cdots+\lambda_\ell=n$, we write
$\lambda\vdash n$. We denote by $\ell(\lambda)$ the length of
$\lambda$, and by $m_i(\lambda)$ the multiplicity of $i$ among the
parts of $\lambda$. A composition of $n$ is a sequence
$\alpha=(\alpha_1,\ldots,\alpha_m)$ of positive integers with
$\alpha_1+\cdots+\alpha_m=n$; in this case we write $\alpha\vDash n$.
We regard the empty sequence $\emptyset$ as the unique composition of
$0$.

Let $\Lambda=\bigoplus_{n\geq 0}\Lambda^n$ be the ring of symmetric
functions over $\Q(q,t)$, where $\Lambda^n$ denotes the space
of homogeneous symmetric functions of degree $n$. We use the standard
bases of $\Lambda$: the monomial, elementary, complete homogeneous,
power-sum, and Schur symmetric functions, denoted $m_\mu, e_\mu, h_\mu,
p_\mu$, and $s_\mu$, respectively.

The involutory automorphism $\omega: \Lambda \to \Lambda$ is defined by
$\omega(e_n) = h_n$. The bilinear inner product, called the \emph{Hall inner
product}, is defined by
\[
\langle s_\lambda, s_\mu\rangle = \delta_{\lambda,\mu}.
\]
By the duality properties of the Hall inner product, the coefficient of
$s_\lambda$ in a symmetric function $f$ is $\langle f,s_\lambda\rangle$,
whereas the coefficient of $e_\lambda$ in $f$ is
$\langle f, \omega(m_\lambda) \rangle$.

For convenience, we write $F_\mu := (-1)^{|\mu|-\ell(\mu)}m_\mu$; when
$\mu=(n)$ consists of a single part, we abbreviate
$F_n:=F_{(n)}=(-1)^{n-1}p_n=\omega(p_n)$. (The 
letter $F$ is unrelated to Gessel's fundamental quasisymmetric functions,
which are not assigned a symbol in this paper.)
We use the Newton identity
\[
n h_n=\sum_{i=1}^n p_i h_{n-i}.
\]

Let $\wtH_\mu[X;q,t]$ denote the modified Macdonald polynomial.
The nabla operator $\nabla$ is defined as the diagonal operator on the
modified Macdonald basis by
\[
\nabla \wtH_\mu[X;q,t] = T_\mu\,\wtH_\mu[X;q,t],
\qquad\text{where}\qquad
T_\mu=\prod_{c\in\mu} q^{a'(c)}t^{\ell'(c)}.
\]
Here $a'(c)$ and $\ell'(c)$ denote the coarm and coleg of the cell
$c\in\mu$, respectively; see Haglund \cite{Hag08}.

We use plethystic notation throughout the paper. Let
\(X=x_1+x_2+\cdots\). For a symmetric function $f\in\Lambda$ and an
expression $A$ in some variables and parameters, the plethystic evaluation
$f[A]$ is defined by first writing $f$ as a polynomial in the power-sum
functions $p_k$, and then replacing each $p_k$ by $p_k[A]$, where
$p_k[A]$ is obtained from $A$ by raising every monomial in $A$
to the $k$-th power.

A symmetric function $f\in\Lambda$ is called \emph{Schur positive} if
\[
f=\sum_\mu c_\mu(q,t)s_\mu
\qquad\text{with}\qquad
c_\mu(q,t)\in\N[q,t],
\]
equivalently, if $\langle f,s_\mu\rangle\in\N[q,t]$ for every $\mu$.
Similarly, a symmetric function $f\in\Lambda$ is called \emph{$e$-positive} if
\[
f=\sum_\mu c_\mu(q,t)e_\mu
\qquad\text{with}\qquad
c_\mu(q,t)\in\N[q,t],
\]
equivalently, if $\langle f,\omega(m_\mu)\rangle\in\N[q,t]$ for every $\mu$.
By convention, $[n]_q=1+q+\cdots+q^{n-1}$ denotes the $q$-integer.

\subsection{The \texorpdfstring{$C$}{C} operators}
In \cite{HMZ12}, Haglund, Morse and Zabrocki introduced the operators
$C_a$, defined by
\begin{equation}\label{eqdefC}
	C_a(f)[X]
	=
	\left(-\frac{1}{q}\right)^{a-1}
	\left\langle z^a \right\rangle
	f\left[X-\frac{1-1/q}{z}\right]
	\sum_{r\geq 0} h_r[X]z^r,
\end{equation}
where $\langle z^a\rangle G(z)$ denotes the coefficient of $z^a$ in
the formal Laurent series $G(z)$.

For any symmetric function $f$ and any composition
$\alpha=(\alpha_1,\ldots,\alpha_m)$, we write
\begin{equation}
	C_\alpha(f) = C_{\alpha_1}\bigl(C_{\alpha_2}(\cdots C_{\alpha_m}(f)\cdots)\bigr).
\end{equation}
For the empty composition we use the convention $C_\emptyset(f)=f$, so
$C_\emptyset(1)=1$.  Note that $C_a$ sends $\Lambda^n$ to
$\Lambda^{n+a}$, so that $C_\alpha(1)\in\Lambda^{|\alpha|}$. We also
recall the following identity of Haglund, Morse and Zabrocki \cite{HMZ12}:
\begin{equation}\label{eqHMZen}
	e_n=\sum_{\alpha\vDash n}C_\alpha(1).
\end{equation}

The following two lemmas, which we use repeatedly in Sections~3
and~4, are derived from the definition \eqref{eqdefC}.

\begin{lemma}\label{lemma3}
	For every positive integer $n$,
	\begin{equation}\label{eqlemma3}
		F_n
		=
		\sum_{j=1}^{n-1} q^{j-1} C_j(F_{n-j})
		+
		[n]_q C_n(1).
	\end{equation}
\end{lemma}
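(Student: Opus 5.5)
The plan is a direct computation from the definition \eqref{eqdefC}: apply $C_j$ to a single power sum, which plethysm makes explicit, and then compare the two sides of \eqref{eqlemma3} using the Newton identity.

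\emph{Step 1: closed form for $C_j(p_m)$.} Since $p_m[X-(1-1/q)/z]=p_m[X]-(1-q^{-m})z^{-m}$, extracting the coefficient of $z^j$ gives
\[
C_j(p_m)=\left(-\frac1q\right)^{j-1}\Bigl(p_m h_j-(1-q^{-m})\,h_{j+m}\Bigr).
\]
Similarly $C_n(1)=(-1/q)^{n-1}h_n$. With $F_{n-j}=(-1)^{n-j-1}p_{n-j}$, the $j$-th summand of \eqref{eqlemma3} becomes
\[
q^{j-1}(-1)^{n-j-1}\left(-\tfrac1q\right)^{j-1}\Bigl(p_{n-j}h_j-(1-q^{j-n})h_n\Bigr)
=(-1)^n\Bigl(p_{n-j}h_j-(1-q^{j-n})h_n\Bigr).
\]
Here the powers of $q$ cancel and the signs combine to $(-1)^n$.

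\emph{Step 2: sum over $j$ and use Newton's identity.} By $nh_n=\sum_{i=1}^n p_ih_{n-i}$,
\[
\sum_{j=1}^{n-1}p_{n-j}h_j=nh_n-p_n.
\]
Also $\sum_{j=1}^{n-1}(1-q^{j-n})=(n-1)-\sum_{k=1}^{n-1}q^{-k}$. So the sum over $j$ equals
\[
(-1)^n\Bigl(-p_n+h_n\sum_{k=0}^{n-1}q^{-k}\Bigr).
\]

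\emph{Step 3: the boundary term.} We have
\[
[n]_qC_n(1)=(-1)^{n-1}q^{-(n-1)}[n]_q\,h_n=(-1)^{n-1}h_n\sum_{k=0}^{n-1}q^{-k}.
\]
This exactly cancels the $h_n$ term from Step 2. The total is therefore $(-1)^{n-1}p_n=F_n$, which proves \eqref{eqlemma3}. For $n=1$ the sum is empty and the identity reduces to $C_1(1)=h_1=p_1$.

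The only point needing care is Step 1: the plethystic substitution must be handled correctly, with $p_m[-(1-1/q)/z]=-(1-q^{-m})z^{-m}$, and the overall sign and $q$-power bookkeeping must be tracked. After that, everything reduces to Newton's identity.
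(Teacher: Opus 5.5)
Your proposal is correct and follows essentially the same route as the paper: compute $q^{j-1}C_j(F_{n-j})=(-1)^n\bigl(p_{n-j}h_j-(1-q^{j-n})h_n\bigr)$ directly from the definition, use $C_n(1)=(-1/q)^{n-1}h_n$, and close with the Newton identity. Your bookkeeping of signs and $q$-powers, and the separate check of the case $n=1$, are all accurate.
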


\begin{proof}
	Fix $1\leq j\leq n-1$, and set $m=n-j$. By the definition
	\eqref{eqdefC} of $C_j$,
	\begin{align*}
		q^{j-1} C_j(F_m)
		&=
		q^{j-1}
		\left(-\frac{1}{q}\right)^{j-1}
		\left\langle z^j\right\rangle
		F_m\left[X-\frac{1-1/q}{z}\right]
		\sum_{r\geq 0}h_r[X]z^r \\
		&=
		(-1)^{j-1}
		\left\langle z^j\right\rangle
		(-1)^{m-1}
		p_m\left[X-\frac{1-1/q}{z}\right]
		\sum_{r\geq 0}h_r[X]z^r \\
		&=
		(-1)^{m+j-2}
		\left\langle z^j\right\rangle
		\left(
		p_m[X]
		-
		\frac{1-q^{-m}}{z^m}
		\right)
		\sum_{r\geq 0}h_r[X]z^r \\
		&=
		(-1)^n
		\left(
		p_m[X]h_j[X]
		-
		(1-q^{-m})h_{m+j}[X]
		\right) \\
		&=
		(-1)^n p_m[X]h_j[X]
		+
		(-1)^{n-1}(1-q^{-m})h_n[X].
	\end{align*}
	Since $C_n(1)=(-1/q)^{\,n-1}h_n[X]$ directly from \eqref{eqdefC},
	substituting $m=n-j$ and summing over $j=1,\ldots,n-1$ gives
	\begin{align*}
		&\quad\quad \sum_{j=1}^{n-1}q^{j-1}C_j(F_{n-j})
		+
		[n]_q C_n(1) \\
		&=
		(-1)^n\sum_{j=1}^{n-1}p_{n-j}[X]h_j[X]
		+
		(-1)^{n-1}
		\sum_{j=1}^{n-1}
		\left(1-q^{-(n-j)}\right)h_n[X]
		+
		(-1)^{n-1}
		\left(\sum_{m=0}^{n-1}q^{-m}\right)h_n[X] \\
		&=
		(-1)^n\sum_{j=1}^{n-1}p_{n-j}[X]h_j[X]
		+
		(-1)^{n-1}h_n[X]
		\left(
		\sum_{m=1}^{n-1}(1-q^{-m})
		+
		\sum_{m=0}^{n-1}q^{-m}
		\right) \\
		&=
		(-1)^n\sum_{j=1}^{n-1}p_{n-j}[X]h_j[X]
		+
		(-1)^{n-1}n h_n[X].
	\end{align*}
	By the Newton identity,
	\[
	p_n[X]
	=
	n h_n[X]
	-
	\sum_{i=1}^{n-1}p_i[X]h_{n-i}[X].
	\]
	Reindexing via $i=n-j$ turns the Newton identity into
	\[
	(-1)^{n-1}p_n[X]=(-1)^{n}\sum_{j=1}^{n-1}p_{n-j}[X]h_j[X]+(-1)^{n-1}n h_n[X],
	\]
	so
	\[
	\sum_{j=1}^{n-1}q^{j-1}C_j(F_{n-j})
	+
	[n]_q C_n(1)
	=
	(-1)^{n-1}p_n[X]
	=
	F_n. \qedhere
	\]
\end{proof}

\begin{lemma}\label{lemma4}
	For all positive integers $a$ and $i$, and for any symmetric
	function $G$,
	\begin{equation}\label{eqlemma4}
		F_a C_i(G) = C_i(F_a G) + (1-q^a)C_{a+i}(G).
	\end{equation}
\end{lemma}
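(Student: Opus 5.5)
Unwind both sides through the definition \eqref{eqdefC} and compare coefficients of powers of $z$; the only ingredient beyond the definition is the plethystic translation formula for power sums. Since $F_a=(-1)^{a-1}p_a$ is a power sum, we have
\[
F_a\left[X-\frac{1-1/q}{z}\right]=F_a[X]-(-1)^{a-1}\frac{1-q^{-a}}{z^{a}},
\]
exactly as in the proof of Lemma~\ref{lemma3}.

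First I compute $C_i(F_aG)$. By definition,
\[
C_i(F_aG)=\left(-\frac1q\right)^{i-1}\langle z^i\rangle\, F_a\!\left[X-\tfrac{1-1/q}{z}\right]G\!\left[X-\tfrac{1-1/q}{z}\right]\sum_{r\ge0}h_r[X]z^r,
\]
because plethysm is multiplicative. Substituting the displayed formula for $F_a[\cdots]$ splits this into two terms:
\[
C_i(F_aG)=F_a[X]\,C_i(G)\;-\;(-1)^{a-1}(1-q^{-a})\left(-\frac1q\right)^{i-1}\langle z^{i+a}\rangle\, G\!\left[X-\tfrac{1-1/q}{z}\right]\sum_{r\ge0}h_r[X]z^r .
\]
In the first term the factor $F_a[X]$ does not involve $z$, so it comes outside the coefficient extraction. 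In the second term, dividing by $z^a$ shifts the extracted coefficient from $z^i$ to $z^{i+a}$.

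Next I identify the second term as a multiple of $C_{a+i}(G)$. Since
\[
\langle z^{a+i}\rangle\, G\!\left[X-\tfrac{1-1/q}{z}\right]\sum_{r\ge0}h_r[X]z^r=\left(-\frac1q\right)^{-(a+i-1)}C_{a+i}(G),
\]
the second term equals $-(-1)^{a-1}(1-q^{-a})(-1/q)^{-a}\,C_{a+i}(G)$. It remains to simplify the scalar:
\[
-(-1)^{a-1}(-1)^{a}q^{a}(1-q^{-a})=q^a(1-q^{-a})=q^a-1.
\]
Therefore $C_i(F_aG)=F_aC_i(G)-(1-q^a)C_{a+i}(G)$, and rearranging gives \eqref{eqlemma4}.

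The argument has no real obstacle; the only care needed is in tracking the signs and powers of $q$ in the scalar. I would check the scalar in the case $G=1$, $a=i=1$ to confirm the normalisation. Here $F_1=p_1=h_1$ and $C_1(1)=h_1$, so the left side is $h_1^2$. The right side requires $C_1(h_1)$, which is $\langle z\rangle\,(h_1[X]-(1-1/q)/z)\sum_r h_rz^r=h_1^2-(1-1/q)h_2$, and $C_2(1)=-h_2/q$. Then $C_1(h_1)+(1-q)C_2(1)=h_1^2-(1-1/q)h_2-(1-q)h_2/q=h_1^2$, which matches the left side.
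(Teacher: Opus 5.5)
Your proof is correct and takes essentially the same approach as the paper. Both arguments expand via the definition \eqref{eqdefC}, use $p_a\bigl[\frac{1-1/q}{z}\bigr]=(1-q^{-a})z^{-a}$ to isolate the term $(-1)^{a-1}(1-q^{-a})z^{-a}$, shift the coefficient extraction from $z^{i}$ to $z^{i+a}$, and simplify the scalar to $q^a-1$. The only difference is that the paper computes the difference $F_aC_i(G)-C_i(F_aG)$ directly, whereas you expand $C_i(F_aG)$ and then rearrange.
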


\begin{proof}
	By the definition \eqref{eqdefC} of $C_i$,
	\begin{align*}
		F_a C_i(G) - C_i(F_a G)
		&=
		\left(-\frac{1}{q}\right)^{i-1}
		\left\langle z^i\right\rangle
		\left(
		F_a[X]
		-
		F_a\left[X-\frac{1-1/q}{z}\right]
		\right) \\
		&\qquad
		G\left[X-\frac{1-1/q}{z}\right]
		\sum_{r\geq 0}h_r[X]z^r.
	\end{align*}
	Since $F_a=(-1)^{a-1}p_a$ and
	$p_a\bigl[\frac{1-1/q}{z}\bigr] = (1-q^{-a})z^{-a}$,
	\[
	F_a[X]
	-
	F_a\left[X-\frac{1-1/q}{z}\right]
	=
	(-1)^{a-1}(1-q^{-a})z^{-a}.
	\]
	Therefore,
	\begin{align*}
		F_a C_i(G) - C_i(F_a G)
		&=
		\left(-\frac{1}{q}\right)^{i-1}
		(-1)^{a-1}(1-q^{-a})
		\left\langle z^{i+a}\right\rangle
		G\left[X-\frac{1-1/q}{z}\right]
		\sum_{r\geq 0}h_r[X]z^r \\
		&=
		(1-q^a)
		\left(-\frac{1}{q}\right)^{i+a-1}
		\left\langle z^{i+a}\right\rangle
		G\left[X-\frac{1-1/q}{z}\right]
		\sum_{r\geq 0}h_r[X]z^r \\
		&=
		(1-q^a)C_{a+i}(G). \qedhere
	\end{align*}
\end{proof}

\subsection{Schur positivity of \texorpdfstring{$\nabla^r C_\alpha(1)$}{nabla\^{}r C\_alpha(1)}}
The following result is used in the proof of Theorem~\ref{mainthm}. For
$r=1$ it is an immediate consequence of the compositional shuffle theorem
of Carlsson and Mellit \cite{CM18} and the Schur positivity of LLT
polynomials proved by Grojnowski and Haiman \cite{GrojnowskiHaiman07}. For
general $r\geq 1$ it follows from Mellit's compositional
$(km,kn)$-shuffle theorem \cite{Mel21}. As this
exact formulation does not appear explicitly in the literature, we include
a short derivation for completeness.

\begin{proposition}\label{propnablaC}
	For every integer $r\geq 1$, every composition $\alpha\vDash n$, and
	every partition $\lambda\vdash n$,
	\[
	\langle \nabla^r C_\alpha(1), s_\lambda\rangle \in \N[q,t]
	\quad\text{and}\quad
	\langle (\nabla^r C_\alpha(1))[X;q+1], \omega(m_\lambda)\rangle \in \N[q,t].
	\]
\end{proposition}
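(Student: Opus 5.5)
The plan is to express $\nabla^r C_\alpha(1)$ as a nonnegative integer combination of LLT polynomials and then quote the known positivity results for LLT polynomials. The key input is Mellit's compositional $(km,kn)$-shuffle theorem \cite{Mel21}, taken in the special case of slope $r$: the lattice is $n\times rn$, i.e.\ $(km,kn)=(n,rn)$ with $m=1$, $k=n$. In that setting the compositional refinement is indexed by a composition $\alpha\vDash n$. It specifies the touch points of the Dyck path with the diagonal of slope $r$, and the theorem identifies the sum over rational parking functions with those touch points with an operator expression. For $r=1$ this is exactly the Carlsson--Mellit formula $\nabla C_\alpha(1)=\sum_{\pi} t^{\mathrm{area}(\pi)}q^{\mathrm{dinv}(\pi)}x^\pi$, summed over parking functions whose Dyck path has touch composition $\alpha$ \cite{CM18}.

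The first step is to match Mellit's operator side with $\nabla^r C_\alpha(1)$. Mellit's theorem is phrased in terms of the elliptic Hall algebra or the $D$-operators. I would use the standard dictionary: the Haglund--Morse--Zabrocki operators $C_a$ agree, up to conventions, with his operators $d_-$ and their compositions acting on $1$. The operator $\nabla^r$ then conjugates the slope-$0$ generators to slope-$1/r$ ones, via $\nabla^r P_{0,1}\nabla^{-r}$-type relations in the elliptic Hall algebra. With this, the touch-composition refinement of the $(n,rn)$ shuffle theorem reads
\[
\nabla^r C_\alpha(1)=\sum_{P} t^{\mathrm{area}(P)} q^{\mathrm{dinv}(P)}\,\chi_P,
\]
where $P$ runs over Dyck paths in the $n\times rn$ rectangle with touch composition $\alpha$, the exponents are the appropriate statistics, and $\chi_P$ is a column LLT polynomial, i.e.\ the dinv-refined sum over word labellings. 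I expect this identification to be the main obstacle, because it requires careful tracking of sign and power-of-$q$ normalizations. If the direct conjugation argument becomes messy, I would fall back on checking that Mellit's recursion for the rational compositional characteristic functions coincides with the $C_a$ recursion. That check uses Lemma~\ref{lemma4}-style commutation relations together with $\nabla^r$ acting on the $r$-th power of the relevant operator identity, as in \cite{BGLX16}.

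Once the expansion is in hand, each $\chi_P$ is an LLT polynomial $\mathcal{G}_{\boldsymbol\nu}[X;q]$ for a tuple $\boldsymbol\nu$ of single columns, multiplied by a monomial in $t$ and $q$. The coefficient of each such term is therefore in $\N[q,t]$. The first claim then follows from Grojnowski--Haiman \cite{GrojnowskiHaiman07}: $\langle \mathcal{G}_{\boldsymbol\nu},s_\lambda\rangle\in\N[q]$, so summing gives $\langle\nabla^rC_\alpha(1),s_\lambda\rangle\in\N[q,t]$.

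For the second claim, I would substitute $q\mapsto q+1$ in the expansion. The prefactors $t^{a}q^{b}$ become $t^a(q+1)^b$, which still have coefficients in $\N[q,t]$. By D'Adderio \cite{DAdderio19}, each column LLT polynomial $\mathcal{G}_{\boldsymbol\nu}[X;q+1]$ is $e$-positive with coefficients in $\N[q]$. Products and sums of these remain $e$-positive, so $\langle(\nabla^rC_\alpha(1))[X;q+1],\omega(m_\lambda)\rangle\in\N[q,t]$.
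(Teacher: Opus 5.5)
Your proposal is correct and follows essentially the same route as the paper. Mellit's compositional $(km,kn)$-shuffle theorem in the slope-$r$ case writes $\nabla^r C_\alpha(1)$ as an $\N[q,t]$-combination of column LLT polynomials, and Grojnowski--Haiman and D'Adderio then give the two positivity statements. The identification you flag as the main obstacle is exactly what the paper settles by citing Bergeron--Garsia--Leven--Xin (Theorem~5.1, Remark~5.1, Proposition~5.1), namely $C^{(\alpha)}_{rn,n}\cdot 1=\nabla^r\,\underline{C_\alpha(1)}\,\nabla^{-r}\cdot 1=\nabla^r C_\alpha(1)$, which is your conjugation heuristic; the sign $(-1)^{k(n_0+1)}$ equals $1$ in the paper's indexing $(m,n_0)=(r,1)$, $k=n$.
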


\begin{proof}
	Mellit's compositional $(km,kn)$-shuffle theorem \cite{Mel21} states that
	for every coprime pair $(m,n_0)$ of positive integers, every integer
	$k\geq 1$, and every composition $\beta\vDash k$, the symmetric function
	$(-1)^{k(n_0+1)}\,C^{(\beta)}_{km,kn_0}\cdot 1$ is a sum of column LLT
	polynomials with coefficients in $\N[q,t]$; here
	$C^{(\beta)}_{km,kn_0}$ is the operator constructed from $C_\beta(1)$ by
	Bergeron, Garsia, Leven and Xin
	\cite[Algorithm~3.1 and Conjecture~3.3]{BGLX16}, and the LLT expansion of
	the combinatorial side is given in \cite[eqs.~(6.2)--(6.3)]{BGLX16}.
	
	We specialize to $(m,n_0)=(r,1)$, $k=n$ and $\beta=\alpha$, so that
	the sign $(-1)^{k(n_0+1)}=(-1)^{2n}$ equals $1$. By Theorem~5.1,
	Remark~5.1 and Proposition~5.1 of Bergeron, Garsia, Leven and Xin
	\cite{BGLX16}, we have
	\[
	C^{(\alpha)}_{rn,n}\cdot 1
	=\nabla^{r-1}\bigl(\nabla\,\underline{C_\alpha(1)}\,
	\nabla^{-1}\bigr)\nabla^{-(r-1)}\cdot 1
	=\nabla^{r}\,\underline{C_\alpha(1)}\,\nabla^{-r}\cdot 1
	=\nabla^{r}C_\alpha(1),
	\]
	where $\underline{C_\alpha(1)}$ denotes the operator of multiplication by
	$C_\alpha(1)$, and the last equality uses that $\nabla^{-r}$ fixes
	constants. Consequently, $\nabla^{r}C_\alpha(1)$ is a sum of column LLT
	polynomials with coefficients in $\N[q,t]$. Since every LLT
	polynomial is Schur positive by Grojnowski and Haiman
	\cite{GrojnowskiHaiman07}, it follows that $\nabla^{r}C_\alpha(1)$ is
	Schur positive.
	By the $e$-positivity of column LLT polynomials after the substitution
	$q\mapsto q+1$, proved by D'Adderio \cite{DAdderio19}, the symmetric
	function
	$(\nabla^r C_\alpha(1))[X;q+1]$ is $e$-positive.
\end{proof}

\section{Labelled multisets}
In this section, we introduce labelled multisets and the symmetric functions
$\wtF_A$, and establish the auxiliary lemmas needed in
Section~4.

A \emph{labelled multiset} $A$ is a finite multiset of positive integers
in which equal values are distinguished by labels: when a value occurs more
than once, we label its copies by subscripts $1,2,\ldots$, and when a value
occurs only once, we write it without a subscript. For
instance, $A=\{1,2_1,2_2\}$ has three pairwise distinct elements, one of
value $1$ and two of value $2$. Because its elements are pairwise
distinct, a labelled multiset may be treated as a finite set: subsets,
unions, differences and set partitions of labelled multisets are taken in
the ordinary sense.

Whenever an element of a labelled multiset appears in a formula, it stands
for its integer value; for instance, $2_1$ and $2_2$ both stand for
the integer $2$ in sums and inequalities. With this convention, we set
\[
S_A:=\sum_{x\in A}x,
\qquad
\ell(A):=|A|,
\]
so that $S_A$ is the sum of the values of the elements of $A$ (with $S_\emptyset=0$), and
$\ell(A)$ is the number of elements of $A$.

Let $\mu(A)$ be the partition whose parts are the values of the elements
of $A$, arranged in weakly decreasing order. We define
\[
    F_A := F_{\mu(A)}
    =
    (-1)^{|\mu(A)|-\ell(\mu(A))}m_{\mu(A)}.
\]
Since $|\mu(A)|=S_A$ and $\ell(\mu(A))=\ell(A)$, we have
$F_A=(-1)^{S_A-\ell(A)}m_{\mu(A)}$. In particular, for a singleton
$A=\{n\}$ we have $F_{\{n\}}=F_{(n)}=F_n=(-1)^{n-1}p_n$.

We further define
\[
    \wtF_A
    :=
    \left(\prod_{i\geq 1} m_i(A)!\right)F_A,
\]
where $m_i(A)$ denotes the multiplicity of $i$ in $A$, so that
$m_i(A)=m_i(\mu(A))$. Subsets and set partitions of $A$ treat
elements with equal values as distinct, while the symmetric functions
$F_A$ and $\wtF_A$ depend only on the partition $\mu(A)$.
Accordingly, for a partition $\mu$ we also write $\wtF_\mu:=\wtF_A$ for
any labelled multiset $A$ with $\mu(A)=\mu$; explicitly,
$\wtF_\mu=\bigl(\prod_{i\geq 1}m_i(\mu)!\bigr)F_\mu
=(-1)^{|\mu|-\ell(\mu)}\,\wtm_\mu$, where
$\wtm_\mu:=\bigl(\prod_{i\geq 1}m_i(\mu)!\bigr)m_\mu$ is the
augmented monomial symmetric function, which appears for instance in
Stanley's work on chromatic symmetric functions \cite{Sta95}.  We use the
convention $F_\emptyset=\wtF_\emptyset=1$.

It will also be useful to keep the labelled version of the augmented monomial
function visible.  For a labelled multiset $A=\{a_1,\ldots,a_k\}$, set
\[
    M_A
    :=
    \sum_{\substack{i_1,i_2,\ldots,i_k\\ \mathrm{distinct}}}
    x_{i_1}^{a_1}x_{i_2}^{a_2}\cdots x_{i_k}^{a_k},
\]
with $M_\emptyset=1$.  Then $M_A=\wtm_{\mu(A)}$, and hence
\[
    \wtF_A=(-1)^{S_A-\ell(A)}M_A.
\]

We illustrate these definitions with an example.

\begin{example}
    If $A=\{1,2_1,2_2,3_1,3_2,3_3\}$, then
    \[
        S_A=14,\qquad \ell(A)=6,\qquad
        \mu(A)=(3,3,3,2,2,1),
    \]
    so that
    \[
        F_A
        =
        (-1)^{14-6}m_{(3,3,3,2,2,1)}
        =
        m_{(3,3,3,2,2,1)}
        \qquad\text{and}\qquad
        \wtF_A
        =
        3!\,2!\,1!\,F_A
        =
        12\,m_{(3,3,3,2,2,1)}.
    \]
\end{example}

We begin with several auxiliary lemmas.  The first is the augmented,
labelled form of the E\u{g}ecio\u{g}lu--Remmel brick-tabloid formula for the
transition from the monomial basis to the power-sum basis \cite{ER91}.  We
recall the form of the formula that we need, using the notation of
E\u{g}ecio\u{g}lu and Remmel.  For partitions $\lambda,\mu\vdash n$, let
$B_{\lambda,\mu}$ be the set of $\lambda$-brick tabloids of shape
$\mu$, that is, tilings of the Young diagram of shape $\mu$ by horizontal
bricks whose lengths are the parts of $\lambda$, with bricks of the same
length regarded as indistinguishable.  If $\tau\in B_{\lambda,\mu}$, let
$\operatorname{wt}(\tau)$ be the product of the lengths of the rightmost bricks
in the rows of $\tau$, and set
$\operatorname{wt}(B_{\lambda,\mu})=\sum_{\tau\in B_{\lambda,\mu}}\operatorname{wt}(\tau)$.  Then
E\u{g}ecio\u{g}lu and Remmel's formula gives
\[
    m_\lambda
    =
    \sum_{\mu\vdash n}
    (-1)^{\ell(\lambda)-\ell(\mu)}
    \frac{\operatorname{wt}(B_{\lambda,\mu})}{z_\mu}\,p_\mu,
    \qquad\text{where}\qquad
    z_\mu=\prod_{r\geq 1}r^{m_r(\mu)}m_r(\mu)! .
\]

\begin{lemma}\label{lemma1}
    For any labelled multiset $A$, we have
    \begin{equation}\label{eqlemma1}
        \wtF_A
        =
        \sum_{\pi\in\Pi(A)}
        \prod_{B\in\pi}(|B|-1)!\,F_{S_B},
    \end{equation}
    where $\Pi(A)$ denotes the set of set partitions of $A$, and the
    product runs over the blocks $B$ of $\pi$. Informally, $\pi$ groups the
    elements of $A$ into blocks, each block $B$ contributing a single part
    $S_B$ to a coarser partition, and the weight $(|B|-1)!$ counts the cyclic
    orderings of the elements of $B$.
\end{lemma}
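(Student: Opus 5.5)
The plan is to prove \eqref{eqlemma1} by Möbius inversion on the lattice $\Pi(A)$ of set partitions of $A$. We work directly with the labelled functions $M_A$ rather than through the brick-tabloid formula, which should only reappear as the unlabelled shadow of the same identity. Since $\wtF_A=(-1)^{S_A-\ell(A)}M_A$ and $F_{S_B}=(-1)^{S_B-1}p_{S_B}$, it suffices to prove
\[
M_A=\sum_{\pi\in\Pi(A)}\prod_{B\in\pi}(-1)^{|B|-1}(|B|-1)!\,p_{S_B},
\]
and then check the signs.

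First, for a set partition $\sigma$ of $A$, let $A/\sigma$ be the labelled multiset whose elements are the block sums $S_C$, $C\in\sigma$, one element per block. The basic expansion we need is
\[
\prod_{a\in A}p_a=\sum_{\sigma\in\Pi(A)}M_{A/\sigma}.
\]
To see this, expand the left side as a sum over all maps $f\colon A\to\{1,2,\ldots\}$ of the monomials $\prod_a x_{f(a)}^{a}$. Then group the maps $f$ according to the set partition $\sigma=\ker f$, whose blocks are the fibres of $f$. Given $\sigma$, the maps with kernel exactly $\sigma$ correspond to injective assignments of variables to the blocks, so they contribute precisely $M_{A/\sigma}$.

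Applying this blockwise gives, for every $\pi\in\Pi(A)$,
\[
\prod_{B\in\pi}p_{S_B}=\sum_{\sigma\ge\pi}M_{A/\sigma},
\]
where $\sigma\ge\pi$ means $\sigma$ is coarser than $\pi$. This holds because set partitions of the blocks of $\pi$ are exactly the set partitions $\sigma$ of $A$ coarser than $\pi$, and the block sums agree. Möbius inversion on $\Pi(A)$ then yields
\[
M_{A/\pi}=\sum_{\sigma\ge\pi}\mu(\pi,\sigma)\prod_{C\in\sigma}p_{S_C}.
\]
Take $\pi$ to be the partition into singletons, and use the classical formula $\mu(\hat0,\sigma)=\prod_{C\in\sigma}(-1)^{|C|-1}(|C|-1)!$ for the partition lattice. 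This gives the displayed identity for $M_A$.

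Finally, multiply by $(-1)^{S_A-\ell(A)}$ and compare signs. For a given $\sigma$ the sign on the right is
\[
(-1)^{S_A-\ell(A)}\prod_{C\in\sigma}(-1)^{|C|-1}=(-1)^{S_A-|\sigma|}=\prod_{C\in\sigma}(-1)^{S_C-1},
\]
which is exactly the sign that converts $\prod_C p_{S_C}$ into $\prod_C F_{S_C}$. This gives \eqref{eqlemma1}.

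The only real obstacle is the bookkeeping with labels. We must make sure that $M_A$ is indexed by the labelled set, so that equal values are treated as distinct elements and $M_A=\wtm_{\mu(A)}$, with no stray multiplicity factors. This is what makes the kernel decomposition exact and explains why the augmented function $\wtF_A$, rather than $F_A$, appears on the left.
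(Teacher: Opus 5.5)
Your argument is correct, and it takes a genuinely different route from the paper.

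The paper starts from the E\u{g}ecio\u{g}lu--Remmel brick-tabloid formula for $m_\lambda$ in the power-sum basis and then translates it into labelled language by counting:
\begin{itemize}
\item labelling the bricks by the elements of $A$ produces the factor $\prod_i m_i(A)!$;
\item the rows of a tabloid become the blocks of a set partition;
\item summing the rightmost-brick length over all linear orders of a block gives $S_B(|B|-1)!$;
\item these factors $S_B$, together with the permutations of equal-length rows, reassemble $z_\mu$.
\end{itemize}

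You instead derive the transition from scratch, directly in the labelled setting. Your steps are the kernel decomposition $\prod_{a\in A}p_a=\sum_{\sigma\in\Pi(A)}M_{A/\sigma}$, its blockwise version over coarsenings of $\pi$, Möbius inversion on $\Pi(A)$, and the classical value $\prod_{C\in\sigma}(-1)^{|C|-1}(|C|-1)!$ of the Möbius function from $\hat0$ to $\sigma$. All of these check out, including the sign bookkeeping $(-1)^{S_A-\ell(A)}\prod_{C}(-1)^{|C|-1}=\prod_C(-1)^{S_C-1}$.

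What your approach buys:
\begin{itemize}
\item It is self-contained apart from the Möbius function of the partition lattice.
\item It never passes through unlabelled tabloids with indistinguishable bricks, so neither the multiplicity factors nor $z_\mu$ appear.
\item It explains the weight $(|B|-1)!$ conceptually: it is the absolute value of the Möbius function of $\Pi(B)$ from bottom to top, with the sign absorbed into $F_{S_B}$.
\item It also yields the inverse expansion of $\prod_a p_a$. The one-factor case of that expansion is exactly the ``new index, or coincide with the index of one element'' argument the paper uses for $M_Bp_b$ in Lemma~\ref{lemma6}.
\end{itemize}

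The paper's route, by contrast, relies on a published explicit transition formula and needs only a combinatorial relabelling on top of it.
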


\begin{proof}
    The case $A=\emptyset$ is immediate.  Assume $A\neq\emptyset$, and put
    $\lambda=\mu(A)$ and $n=S_A$.  For $\pi\in\Pi(A)$,
    let $\operatorname{sh}(\pi)$ be the partition obtained by rearranging the
    block sums $S_B$, $B\in\pi$, in weakly decreasing order.  By the
    E\u{g}ecio\u{g}lu--Remmel formula,
    \begin{equation}\label{eqERexp}
        \wtF_A
        =
        (-1)^{n-\ell(\lambda)}\left(\prod_{i\geq 1}m_i(A)!\right)m_\lambda
        =
        \sum_{\mu\vdash n}
        (-1)^{n-\ell(\mu)}
        \left(\prod_{i\geq 1}m_i(A)!\right)
        \frac{\operatorname{wt}(B_{\lambda,\mu})}{z_\mu}\,p_\mu .
    \end{equation}
    Thus it remains only to translate the coefficient into labelled-multiset
    notation.  We claim that, for each $\mu\vdash n$,
    \[
        \left(\prod_{i\geq 1}m_i(A)!\right)
        \operatorname{wt}(B_{\lambda,\mu})
        =
        z_\mu
        \sum_{\substack{\pi\in\Pi(A)\\ \operatorname{sh}(\pi)=\mu}}
        \prod_{B\in\pi}(|B|-1)! .
    \]
    The factor $\prod_i m_i(A)!$ is the number of ways to label the bricks by
    the distinct elements of $A$; hence the left-hand side is the sum of
    $\operatorname{wt}(T)$ over all labelled brick tabloids $T$ of shape
    $\mu$.  Such a labelled brick tabloid may be built by first choosing a set
    partition $\pi$ of $A$, whose blocks are the sets of bricks lying in the
    rows, then assigning the blocks with sum $r$ to the $m_r(\mu)$ rows of
    length $r$, and finally linearly ordering the bricks inside each row.  For a
    fixed block $B$, the sum of the rightmost-brick lengths over all linear
    orders of $B$ is
    \[
        \sum_{\text{linear orders of }B}\text{(length of the last brick)}
        =
        S_B(|B|-1)!,
    \]
    because each element of $B$ appears last in exactly $(|B|-1)!$ orders.
    Hence a fixed $\pi$ with $\operatorname{sh}(\pi)=\mu$ contributes total
    weight
    \[
        \left(\prod_{r\geq 1}m_r(\mu)!\right)
        \prod_{B\in\pi}S_B(|B|-1)!
        =
        \left(\prod_{r\geq 1}r^{m_r(\mu)}m_r(\mu)!\right)
        \prod_{B\in\pi}(|B|-1)!
        =
        z_\mu\prod_{B\in\pi}(|B|-1)! ,
    \]
    which proves the claim.  Substituting it into the E\u{g}ecio\u{g}lu--Remmel
    expansion~\eqref{eqERexp} gives
    \[
        \wtF_A
        =
        \sum_{\pi\in\Pi(A)}
        (-1)^{n-|\pi|}
        \left(\prod_{B\in\pi}(|B|-1)!\right)
        \prod_{B\in\pi}p_{S_B}.
    \]
    Absorbing the sign $(-1)^{n-|\pi|}=\prod_{B\in\pi}(-1)^{S_B-1}$ into the
    factors $p_{S_B}$ proves the lemma.
\end{proof}

\begin{lemma}\label{lemma2}
    Let $A$ be a labelled multiset, and let $c$ be a positive integer, regarded as a new labelled element. Then
    \begin{equation}\label{eqlemma2}
        \wtF_{A\cup\{c\}}
        =
        \sum_{T\subseteq A}
        |T|!\,\wtF_{A\setminus T}F_{c+S_T}.
    \end{equation}
\end{lemma}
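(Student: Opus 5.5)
We prove \eqref{eqlemma2} by applying Lemma~\ref{lemma1} to both sides and matching the resulting expansions in the products $\prod_{B}F_{S_B}$, organised by the block containing the new element $c$.

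First apply Lemma~\ref{lemma1} to the labelled multiset $A\cup\{c\}$. Every set partition of $A\cup\{c\}$ is determined by two pieces of data. The first is the block containing $c$, which we write as $T\cup\{c\}$ with $T\subseteq A$. The second is an arbitrary set partition $\sigma$ of $A\setminus T$. The block $T\cup\{c\}$ has size $|T|+1$ and sum $c+S_T$. Hence
\[
\wtF_{A\cup\{c\}}
=
\sum_{T\subseteq A}\sum_{\sigma\in\Pi(A\setminus T)}
|T|!\,F_{c+S_T}\prod_{B\in\sigma}(|B|-1)!\,F_{S_B}.
\]

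Next, for each fixed $T$, pull out the factor $|T|!\,F_{c+S_T}$. The inner sum over $\sigma\in\Pi(A\setminus T)$ is exactly the right-hand side of \eqref{eqlemma1} for the labelled multiset $A\setminus T$. By Lemma~\ref{lemma1} again, it therefore equals $\wtF_{A\setminus T}$. Substituting gives \eqref{eqlemma2}.

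The only point to check is the convention on labels. The element $c$ is adjoined as a new labelled element, possibly relabelling copies of its value. The value $\wtF$ depends only on $\mu$, and subsets are taken among distinct labelled elements. So the bijection between partitions of $A\cup\{c\}$ and pairs $(T,\sigma)$ is valid, and $\wtF_{A\setminus T}$ is well defined.

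The edge cases need no separate treatment. When $T=A$, the convention $\wtF_\emptyset=1$ applies. When $T=\emptyset$, the term is $\wtF_A F_c$. Both agree with Lemma~\ref{lemma1} applied to the empty multiset.

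There is no genuine obstacle here; the argument is a direct bookkeeping consequence of Lemma~\ref{lemma1}.
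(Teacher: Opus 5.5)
Your proof is correct and matches the paper's argument: group the set partitions of $A\cup\{c\}$ in Lemma~\ref{lemma1} by the block $T\cup\{c\}$ containing $c$, which contributes $|T|!\,F_{c+S_T}$, and resum the remaining blocks over $\Pi(A\setminus T)$ to obtain $\wtF_{A\setminus T}$. One small slip in your edge-case remark: only the term $T=A$ uses Lemma~\ref{lemma1} for the empty multiset, not $T=\emptyset$, but nothing in the argument depends on this.
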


\begin{proof}
    By Lemma~\ref{lemma1},
    \[
        \wtF_{A\cup\{c\}}
        =
        \sum_{\pi\in\Pi(A\cup\{c\})}
        \prod_{B\in\pi}(|B|-1)!\,F_{S_B}.
    \]
    We group the set partitions $\pi\in\Pi(A\cup\{c\})$ according to the
    block containing $c$. This block is $T\cup\{c\}$ for a unique subset
    $T\subseteq A$, and the remaining blocks form a set partition of
    $A\setminus T$. The block $T\cup\{c\}$ contributes the factor
    $(|T\cup\{c\}|-1)!\,F_{c+S_T}=|T|!\,F_{c+S_T}$, while the product over the
    remaining blocks, summed over all set partitions of $A\setminus T$,
    equals $\wtF_{A\setminus T}$ by Lemma~\ref{lemma1} applied to
    $A\setminus T$. Summing the resulting contributions
    $|T|!\,\wtF_{A\setminus T}F_{c+S_T}$ over $T\subseteq A$ gives
    \eqref{eqlemma2}.
\end{proof}

\begin{lemma}\label{lemma5}
    Let $B$ be a labelled multiset, let $r$ be a positive integer, and let
    $G$ be a symmetric function. For $U\subseteq B$, set
    \begin{equation}\label{eqdefKU}
        K_U
        :=
        \begin{cases}
            |U|!-(|U|-1)!\sum_{x\in U}q^x, & U\neq\emptyset,\\[1ex]
            1, & U=\emptyset.
        \end{cases}
    \end{equation}
    Then
    \begin{equation}\label{eqlemma5}
        \wtF_B\, C_r(G)
        =
        \sum_{U\subseteq B}
        K_U\, C_{r+S_U}(\wtF_{B\setminus U}G).
    \end{equation}
\end{lemma}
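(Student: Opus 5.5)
Induct on $|B|$. For $B=\emptyset$ both sides equal $C_r(G)$, since $K_\emptyset=1$ and $\wtF_\emptyset=1$.

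For the inductive step, write $B=A\cup\{c\}$ with $c$ a distinguished element. By Lemma~\ref{lemma2},
\[
\wtF_B\,C_r(G)=\sum_{T\subseteq A}|T|!\,\wtF_{A\setminus T}\,F_{c+S_T}\,C_r(G).
\]
In each term, first move $F_{c+S_T}$ inside $C_r$ using Lemma~\ref{lemma4}:
\[
F_{c+S_T}C_r(G)=C_r(F_{c+S_T}G)+(1-q^{c+S_T})\,C_{r+c+S_T}(G).
\]
Then apply the induction hypothesis to $\wtF_{A\setminus T}$, which has fewer than $|B|$ elements, acting on $C_r(F_{c+S_T}G)$ and on $C_{r+c+S_T}(G)$. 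This produces:
\[
\sum_{T}\sum_{V\subseteq A\setminus T}|T|!\,K_V\Bigl[C_{r+S_V}\bigl(\wtF_{A\setminus(T\cup V)}F_{c+S_T}G\bigr)+(1-q^{c+S_T})\,C_{r+c+S_T+S_V}\bigl(\wtF_{A\setminus(T\cup V)}G\bigr)\Bigr].
\]

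Next, compare with the target $\sum_{U\subseteq B}K_U\,C_{r+S_U}(\wtF_{B\setminus U}G)$, splitting by whether $c\in U$.

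When $c\notin U$, we have $U=V\subseteq A$. Use Lemma~\ref{lemma2} again, now applied to $\wtF_{(A\setminus V)\cup\{c\}}$ and read in reverse. The first family of terms then regroups, for fixed $V$, as
\[
\sum_{T\subseteq A\setminus V}|T|!\,\wtF_{A\setminus(V\cup T)}F_{c+S_T}=\wtF_{B\setminus V},
\]
which yields exactly $K_V\,C_{r+S_V}(\wtF_{B\setminus V}G)$.

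When $c\in U$, write $U=\{c\}\cup W$. The second family contributes to $W=T\sqcup V$, with index $r+c+S_W=r+S_U$ and inner function $\wtF_{B\setminus U}G$, as required. So it remains to verify the purely combinatorial identity
\[
K_{W\cup\{c\}}=\sum_{T\sqcup V=W}|T|!\,K_V\,(1-q^{c+S_T}).
\]

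This identity is the main obstacle. It is not obviously true from the definitions, and I would test it on small cases first.

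\emph{Case $W=\emptyset$.} The right side is $1-q^c$, while $K_{\{c\}}=1-q^c$. This checks.

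\emph{Case $W=\{x\}$.} The right side is $K_{\{x\}}(1-q^c)+(1-q^{c+x})=(1-q^x)(1-q^c)+1-q^{c+x}$. The left side is $2-q^x-q^c$. Their difference is $-2q^{c+x}+\dots$, which does not cancel.

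So the naive bookkeeping fails as set up, meaning the monomial terms $q^{c+S_T}$ must be absorbed differently. My guess is that the first family, after applying Lemma~\ref{lemma2}, produces extra mixed terms in which $F_{c+S_T}$ acts on the inner function. These would also need Lemma~\ref{lemma4} to be pushed further inside before the comparison is made. Equivalently, I would instead induct using Lemma~\ref{lemma1} and Lemma~\ref{lemma4} block by block. Concretely, expand $\wtF_B=\sum_\pi\prod(|B_i|-1)!F_{S_{B_i}}$ and commute each $F_{S_{B_i}}$ past $C_r$ one at a time. Each factor either stays inside or is absorbed into the index. This gives
\[
\wtF_B\,C_r(G)=\sum_{\pi}\prod_i(|B_i|-1)!\sum_{J}\prod_{i\in J}(1-q^{S_{B_i}})\,C_{r+\sum_{J}S_{B_i}}\Bigl(\prod_{i\notin J}F_{S_{B_i}}G\Bigr),
\]
where $J$ runs over the set of absorbed blocks.

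Grouping by $U=\bigcup_{i\in J}B_i$, the blocks outside $J$ reassemble into $\wtF_{B\setminus U}$ by Lemma~\ref{lemma1}. The coefficient of $C_{r+S_U}(\wtF_{B\setminus U}G)$ then becomes
\[
\sum_{\sigma\in\Pi(U)}\prod_{D\in\sigma}(|D|-1)!\,(1-q^{S_D}),
\]
and the remaining task is to show this equals $K_U$, for example by a generating-function or cycle argument. As a check, for $U=\{x,y\}$ it gives $(1-q^x)(1-q^y)+1-q^{x+y}=2-q^x-q^y$, which agrees with $K_U$. So this is the route I would follow, and proving that partition sum equals $K_U$ in general is where I expect the real work to be.
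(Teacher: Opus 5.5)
Your final route is exactly the paper's, and that part is correct: expand $\wtF_B$ by Lemma~\ref{lemma1}, push each factor $F_{S_D}$ past the operator with Lemma~\ref{lemma4} (each factor is either kept inside or absorbed into the index), and regroup by the union $U$ of the absorbed blocks. The gap is that you stop at the one step with real content. The identity
\[
\sum_{\rho\in\Pi(U)}\prod_{D\in\rho}(|D|-1)!\,\bigl(1-q^{S_D}\bigr)=K_U
\]
is checked only for $|U|\le 2$. Without it you have rewritten the coefficient but not shown that it equals $K_U$, and that equality is the whole content of the lemma.

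The paper proves the identity as follows. Read $\prod_{D\in\rho}(|D|-1)!$ as the number of permutations of $U$ whose cycles are the blocks of $\rho$, so the left side becomes $\sum_{\sigma\in\mathfrak S_U}\prod_{c\in\Cyc(\sigma)}(1-q^{S_c})$. Expand, and group by the union $W$ of the cycles that contribute $-q^{S_c}$. This gives $\sum_{W\subseteq U}q^{S_W}\,|U\setminus W|!\sum_{\tau\in\mathfrak S_W}(-1)^{|\Cyc(\tau)|}$. The inner sum equals $(-1)^{|W|}\sum_{\tau\in\mathfrak S_W}\operatorname{sgn}(\tau)$, which is $1$, $-1$, $0$ for $|W|=0$, $1$, $\geq 2$.

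You also abandoned your first route because of an arithmetic slip. In fact $(1-q^x)(1-q^c)+1-q^{c+x}=2-q^x-q^c$ exactly, since the $+q^{c+x}$ from the product cancels the $-q^{c+x}$. The identity that route needs,
\[
K_{W\cup\{c\}}=\sum_{T\subseteq W}|T|!\,\bigl(1-q^{c+S_T}\bigr)\,K_{W\setminus T},
\]
holds in general and is short to prove. Let $k=|W|$.
\begin{itemize}
\item Expanding $K_{W\setminus T}$ and using $\sum_{i=0}^{m}\binom{m}{i}i!\,(m-i)!=(m+1)!$ gives $\sum_{T}|T|!\,K_{W\setminus T}=(k+1)!-k!\sum_{y\in W}q^y$.
\item In $\sum_T|T|!\,q^{c+S_T}K_{W\setminus T}$, reindex the negative terms by $T'=T\cup\{y\}$. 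They then cancel every positive term with $T'\neq\emptyset$, leaving only $k!\,q^c$.
\end{itemize}
Subtracting gives $(k+1)!-k!\sum_{y\in W\cup\{c\}}q^y=K_{W\cup\{c\}}$.

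With this identity, your first route is a complete alternative proof. It is an induction on $|B|$ through Lemma~\ref{lemma2}, used once forwards and once backwards, and it needs no sign-reversing argument on permutations. The same computation also closes your second route: conditioning on the block containing a fixed element shows that the partition sum above satisfies exactly this recursion, with value $1$ at $U=\emptyset$.
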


\begin{proof}
    By Lemma~\ref{lemma1}, we have
    \[
        \wtF_B
        =
        \sum_{\pi\in\Pi(B)}
        \prod_{D\in\pi}(|D|-1)!\,F_{S_D}.
    \]
    \begin{claim}
    For every set partition $\pi\in\Pi(B)$,
    \[
        \left(\prod_{D\in\pi}F_{S_D}\right) C_r(G)
        =
        \sum_{\mathcal U\subseteq\pi}
        \left(\prod_{D\in\mathcal U}(1-q^{S_D})\right)
        C_{r+S_{\mathcal U}}
        \left(
            \prod_{D\in\pi\setminus\mathcal U}F_{S_D}G
        \right),
    \]
    where $S_{\mathcal U}:=\sum_{D\in\mathcal U}S_D$.
    \end{claim}

    We argue by induction on the number of blocks $m=|\pi|$.
    By Lemma~\ref{lemma4}, moving a single factor past the operator gives
    $F_a\,C_i(H)=C_i(F_a H)+(1-q^a)\,C_{a+i}(H)$: the factor $F_a$ either
    passes inside the operator, leaving $C_i(F_a H)$, or is absorbed into it,
    raising the subscript by $a$ and producing the scalar $1-q^a$. When
    $m=0$ the product is empty and both sides equal $C_r(G)$. For the
    inductive step, fix a block $D_0\in\pi$ and set $\pi'=\pi\setminus\{D_0\}$.
    By the induction hypothesis,
    \[
        \left(\prod_{D\in\pi'}F_{S_D}\right) C_r(G)
        =
        \sum_{\mathcal V\subseteq\pi'}
        \left(\prod_{D\in\mathcal V}(1-q^{S_D})\right)
        C_{r+S_{\mathcal V}}
        \left(
            \prod_{D\in\pi'\setminus\mathcal V}F_{S_D}G
        \right).
    \]
    We multiply the induction hypothesis by $F_{S_{D_0}}$, obtaining
    $\bigl(\prod_{D\in\pi}F_{S_D}\bigr)C_r(G)$ on the left. On the right,
    Lemma~\ref{lemma4} (with $a=S_{D_0}$ and $i=r+S_{\mathcal V}$) splits the
    $\mathcal V$-summand in two: $F_{S_{D_0}}$ either passes inside the
    operator, producing the $\mathcal U=\mathcal V$ summand for $\pi$, or is
    absorbed, producing the $\mathcal U=\mathcal V\cup\{D_0\}$ summand. As
    $\mathcal V$ ranges over the subsets of $\pi'$, these summands exhaust the
    subsets $\mathcal U\subseteq\pi$, each exactly once, proving the claim.

    For $\mathcal U\subseteq\pi$, set $U=\bigcup_{D\in\mathcal U}D$, so that
    $S_{\mathcal U}=S_U$. Substituting the claim, we reorganize the resulting
    double sum over $\pi\in\Pi(B)$ and $\mathcal U\subseteq\pi$ by the set
    $U\subseteq B$: the absorbed blocks form a set partition $\rho\in\Pi(U)$,
    and the remaining blocks form a set partition $\sigma\in\Pi(B\setminus U)$.
    Hence
    \[
    \begin{aligned}
        \wtF_B C_r(G)
        &=
        \sum_{\pi\in\Pi(B)}
        \left(\prod_{D\in\pi}(|D|-1)!\right)
        \left(\prod_{D\in\pi}F_{S_D}\right)C_r(G) \\
        &=
        \sum_{\pi\in\Pi(B)}
        \left(\prod_{D\in\pi}(|D|-1)!\right)
        \sum_{\mathcal U\subseteq\pi}
        \left(\prod_{D\in\mathcal U}(1-q^{S_D})\right)
        C_{r+S_{\mathcal U}}
        \left(
            \prod_{D\in\pi\setminus\mathcal U}F_{S_D}G
        \right) \\
        &=
        \sum_{U\subseteq B}
        \sum_{\rho\in\Pi(U)}
        \sum_{\sigma\in\Pi(B\setminus U)}
        \left(
            \prod_{D\in\rho}(|D|-1)!(1-q^{S_D})
        \right)
        C_{r+S_U}
        \left(
            \prod_{E\in\sigma}(|E|-1)!\,F_{S_E}G
        \right) \\
        &=
        \sum_{U\subseteq B}
        \sum_{\rho\in\Pi(U)}
        \left(
            \prod_{D\in\rho}(|D|-1)!(1-q^{S_D})
        \right)
        C_{r+S_U}(\wtF_{B\setminus U}G) \\
        &=
        \sum_{U\subseteq B}
        L_U\, C_{r+S_U}(\wtF_{B\setminus U}G),
    \end{aligned}
    \]
    where
    \begin{equation}\label{eqdefLU}
        L_U
        :=
        \sum_{\rho\in\Pi(U)}
        \prod_{D\in\rho}(|D|-1)!(1-q^{S_D}).
    \end{equation}

    It suffices to show that $L_U=K_U$. Both sides equal $1$ when $U=\emptyset$, so we may assume $U\neq\emptyset$ and show that $\sum_{\rho\in\Pi(U)}\prod_{D\in\rho}(|D|-1)!(1-q^{S_D})=|U|!-(|U|-1)!\sum_{x\in U}q^x$. Since $\prod_{D\in\rho}(|D|-1)!$ is
    the number of permutations of $U$ whose cycles are the blocks of $\rho$,
    grouping the permutations $\sigma\in\mathfrak S_U$ by their cycle partition
    gives
    \[
        L_U
        =
        \sum_{\rho\in\Pi(U)}
        \Bigl(\prod_{D\in\rho}(|D|-1)!\Bigr)
        \prod_{D\in\rho}\bigl(1-q^{S_D}\bigr)
        =
        \sum_{\sigma\in\mathfrak S_U}
        \prod_{c\in\Cyc(\sigma)}\bigl(1-q^{S_c}\bigr),
    \]
    where $\Cyc(\sigma)$ is the set of cycles of $\sigma$ and $S_c$ is
    the sum of the elements of $c$. Expanding each product and grouping the
    terms by the $\sigma$-invariant union $W\subseteq U$ of the cycles that
    contribute a factor $-q^{S_c}$ gives
    \[
        L_U
        =
        \sum_{W\subseteq U}
        q^{S_W}\,|U\setminus W|!
        \sum_{\tau\in\mathfrak S_W}(-1)^{|\Cyc(\tau)|}.
    \]
    Since the numbers of even and odd permutations of a set of size at least
    $2$ are equal, we have
    \[
        \sum_{\tau\in\mathfrak S_W}(-1)^{|\Cyc(\tau)|}
        =
        (-1)^{|W|}\sum_{\tau\in\mathfrak S_W}\operatorname{sgn}(\tau)
        =
        \begin{cases}
            1, & W=\emptyset,\\
            -1, & |W|=1,\\
            0, & |W|\ge 2.
        \end{cases}
    \]
    Hence $L_U=|U|!-(|U|-1)!\sum_{x\in U}q^x=K_U$, and \eqref{eqlemma5} follows.
\end{proof}

\begin{lemma}\label{lemma6}
    Let $B$ be a labelled multiset, and let $b$ be a positive integer, regarded as a new labelled element. Then
    \begin{equation}\label{eqlemma6}
        \wtF_B F_b
        =
        \wtF_{B\cup\{b\}}
        -
        \sum_{x\in B}
        \wtF_{(B\setminus\{x\})\cup\{x+b\}}.
    \end{equation}
\end{lemma}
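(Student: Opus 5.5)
We will prove \eqref{eqlemma6} directly from the labelled monomial description $\wtF_A=(-1)^{S_A-\ell(A)}M_A$, without any operator calculus. The key input is the elementary product rule for augmented monomials. For $B=\{b_1,\ldots,b_k\}$ we have $F_b=(-1)^{b-1}p_b$ and $p_b=\sum_j x_j^b$. Multiplying $M_B=\sum_{i_1,\ldots,i_k\text{ distinct}}x_{i_1}^{b_1}\cdots x_{i_k}^{b_k}$ by $\sum_j x_j^b$, we split according to whether the new index $j$ differs from all of $i_1,\ldots,i_k$ or coincides with $i_s$ for exactly one $s$.

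\textbf{Step 1.} The terms with $j\notin\{i_1,\ldots,i_k\}$ give exactly $M_{B\cup\{b\}}$, where $b$ is a new labelled element. The terms with $j=i_s$ replace the exponent $b_s$ by $b_s+b$, and so give $M_{(B\setminus\{b_s\})\cup\{b_s+b\}}$. Here we use the labelled convention: the element $b_s$ is removed and a new element of value $b_s+b$ is added, and $M$ depends only on the values. This yields
\[
M_B\,p_b=M_{B\cup\{b\}}+\sum_{x\in B}M_{(B\setminus\{x\})\cup\{x+b\}}.
\]
The identity is a standard one. The labelled/augmented normalization is exactly what makes all the coefficients equal to $1$, with no multiplicity factors appearing.

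\textbf{Step 2: track signs.} Multiply through by $(-1)^{S_B-\ell(B)}(-1)^{b-1}$.
\begin{itemize}
\item For $A=B\cup\{b\}$ we have $S_A-\ell(A)=S_B+b-\ell(B)-1$. The sign is therefore exactly $(-1)^{S_A-\ell(A)}$, and the term is $\wtF_{B\cup\{b\}}$.
\item For $A=(B\setminus\{x\})\cup\{x+b\}$ we have $S_A=S_B+b$ but $\ell(A)=\ell(B)$. The sign $(-1)^{S_B+b-\ell(B)-1}$ therefore equals $-(-1)^{S_A-\ell(A)}$, and each of these terms is $-\wtF_{(B\setminus\{x\})\cup\{x+b\}}$.
\end{itemize}
Together these give \eqref{eqlemma6}.

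The only point needing care, and thus the mild obstacle, is the bookkeeping of labels in Step 1. When $x+b$ coincides in value with existing elements of $B$, or when $b$ coincides with values in $B$, the resulting multiset has new labels attached. We must check that the product expansion of $M_B p_b$ produces each term once per choice of $x\in B$ rather than once per distinct value. This holds because the sum defining $M_B$ is over injective index assignments to the labelled elements, so the coincidence $j=i_s$ is indexed by the labelled element $b_s$ itself.

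As a fallback, one could instead derive the identity from Lemma~\ref{lemma1} by grouping set partitions of $B\cup\{b\}$ according to the block containing $b$, but the monomial computation above is cleaner.
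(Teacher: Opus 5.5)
Your proof is correct and is essentially the paper's argument. The paper likewise proves $M_Bp_b=M_{B\cup\{b\}}+\sum_{x\in B}M_{(B\setminus\{x\})\cup\{x+b\}}$ by splitting on whether the new index coincides with the index of a labelled element of $B$. It then multiplies by $(-1)^{S_B+b-|B|-1}$ to obtain the signs, exactly as in your Step 2.
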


\begin{proof}
    By definition,
    \[
        \wtF_BF_b
        =
        (-1)^{S_B-|B|}M_B\cdot (-1)^{b-1}p_b
        =
        (-1)^{S_B+b-|B|-1}M_Bp_b.
    \]
    We claim that
    \begin{equation}\label{eqMBpb}
        M_Bp_b
        =
        M_{B\cup\{b\}}
        +
        \sum_{x\in B}
        M_{(B\setminus\{x\})\cup\{x+b\}}.
    \end{equation}
    In the product $M_B p_b$ with $p_b=\sum_i x_i^b$, each term is obtained by multiplying a monomial of $M_B$ by some $x_i^b$. The index $i$ either does not occur in that monomial, or is the index of exactly one labelled element $x\in B$. The first case contributes $M_{B\cup\{b\}}$, while the second case contributes
    $M_{(B\setminus\{x\})\cup\{x+b\}}$.

    Multiplying \eqref{eqMBpb} by $(-1)^{S_B+b-|B|-1}$, and using the definitions of $\wtF_{B\cup\{b\}}$ and $\wtF_{(B\setminus\{x\})\cup\{x+b\}}$, yields
    \[
        \wtF_BF_b
        =
        \wtF_{B\cup\{b\}}
        -
        \sum_{x\in B}
        \wtF_{(B\setminus\{x\})\cup\{x+b\}}.
    \]
    This proves the lemma.
\end{proof}

\section{Main results}
The main results of this section are Theorem~\ref{thm1}, which gives a
recursion for $\wtF_A$, and Theorem~\ref{thm2}, which derives
from it a positive $C$ expansion of $\wtF_A$. We then use this
expansion to prove our main results, Theorems~\ref{mainthm} and~\ref{maine}.
\subsection{Main theorems}
We regard $c$, as well as the integers $c-j$ and $b$ appearing in \eqref{eqdefTheta}, as new labelled elements when they are adjoined to a labelled multiset. For a labelled multiset $A$ of positive integers and an integer $c\geq 1$, we set
\begin{equation}\label{eqdefTheta}
    \Theta_{A,c}
    :=
    \wtF_{A\cup\{c\}}
    -
    \sum_{j=1}^{c-1}q^{j-1}C_j(\wtF_{A\cup\{c-j\}}).
\end{equation}

\begin{theorem}\label{thm1}
    Let $A$ be a labelled multiset of positive integers, and let $c\geq 1$ be an integer such that $x\geq c$ for every $x\in A$. Then
    \begin{equation}\label{eqthmTheta}
        \Theta_{A,c}
        =
        \sum_{T\subseteq A}
        \left(
            \sum_{b=1}^{S_T}
            \alpha_{T,b}\,
            C_{c+S_T-b}(\wtF_{(A\setminus T)\cup\{b\}})
            +
            \beta_T\,
            C_{c+S_T}(\wtF_{A\setminus T})
        \right),
    \end{equation}
    where
    \begin{equation}\label{eqdefalpha}
        \alpha_{T,b}
        =
        \begin{cases}
        (|T|-1)!
        \displaystyle\sum_{\substack{x\in T\\ b\leq x}}
        q^{\,(c+x-b-1) \bmod x},
        & T\neq\emptyset,\\[1.2em]
        0,
        & T=\emptyset,
        \end{cases}
    \end{equation}
    and
    \begin{equation}\label{eqdefbeta}
        \beta_T
        =
        (|T|+1)![c]_q
        +
        |T|!\sum_{x\in T}
        \left(q^c+q^{c+1}+\cdots+q^{x-1}\right).
    \end{equation}
    Here $(c+x-b-1)\bmod x$ denotes the least nonnegative residue of $c+x-b-1$ modulo $x$, so that all coefficients $\alpha_{T,b}$ and $\beta_T$ lie in $\N[q]$.
\end{theorem}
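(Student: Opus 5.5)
We expand both terms of $\Theta_{A,c}$ using Lemma~\ref{lemma2}, and then use Lemmas~\ref{lemma3}, \ref{lemma5} and~\ref{lemma4} to rewrite everything in terms of $C$-operators.

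\textbf{Step 1: expand by Lemma~\ref{lemma2}.} Applied to the first term,
\[
\wtF_{A\cup\{c\}}=\sum_{T\subseteq A}|T|!\,\wtF_{A\setminus T}F_{c+S_T}.
\]
Applied to the second term (for each $j$),
\[
\wtF_{A\cup\{c-j\}}=\sum_{T\subseteq A}|T|!\,\wtF_{A\setminus T}F_{c-j+S_T}.
\]

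\textbf{Step 2: rewrite $F_{c+S_T}$ by Lemma~\ref{lemma3}.} For each $T$ we get
\[
F_{c+S_T}=\sum_{j=1}^{c+S_T-1}q^{j-1}C_j(F_{c+S_T-j})+[c+S_T]_q\,C_{c+S_T}(1).
\]

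\textbf{Step 3: move $\wtF_{A\setminus T}$ inside by Lemma~\ref{lemma5}.} This gives sums of terms
\[
K_U\,C_{j+S_U}\bigl(\wtF_{A\setminus(T\cup U)}F_{c+S_T-j}\bigr).
\]

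\textbf{Step 4: cancel the $j\le c-1$ part.} In the second term of $\Theta_{A,c}$ we need to handle $C_j(\wtF_{A\setminus T}F_{c-j+S_T})$. For $j\le c-1$, the $U=\emptyset$ part of Step~3 with $j\le c-1$ reproduces exactly these terms, so they cancel.

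\textbf{Step 5: reparametrize what survives.} The remaining terms are:
\begin{enumerate}[(i)]
\item those with $U\ne\emptyset$;
\item those with $c\le j\le c+S_T-1$;
\item the $[c+S_T]_q$ terms.
\end{enumerate}
In each remaining term, put $b=c+S_T-j$ (so $1\le b\le S_T$ in case (ii)). Then reconvert the product $\wtF_{A\setminus(T\cup U)}F_b$ via Lemma~\ref{lemma6} into
\[
\wtF_{\cdot\cup\{b\}}-\sum_x \wtF_{(\cdot\setminus\{x\})\cup\{x+b\}}.
\]

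\textbf{Step 6: regroup by target.} Collect everything into the two shapes appearing in \eqref{eqthmTheta}, indexed by a new subset $T'=T\cup U$ (possibly enlarged by the element $x$ absorbed via Lemma~\ref{lemma6}). The two shapes are:
\begin{itemize}
\item $C_{c+S_{T'}-b}\bigl(\wtF_{(A\setminus T')\cup\{b\}}\bigr)$;
\item $C_{c+S_{T'}}\bigl(\wtF_{A\setminus T'}\bigr)$.
\end{itemize}
A term $\wtF_{(\cdot\setminus\{x\})\cup\{x+b\}}$ should be read as $\wtF_{(A\setminus T')\cup\{b'\}}$ with $b'=x+b$. There is an ordering issue here: since $x\ge c$, we get $b'>x\ge c$. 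It may be cleaner to first apply Lemma~\ref{lemma2} in reverse to recombine. Concretely: sum over $T$ with weights $|T|!$ and over $U$ with $K_U$, and use the set-partition identity behind $L_U=K_U$ to combine the coefficients of a fixed $T'$ into a polynomial in $q$.

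\textbf{Step 7: compute the coefficients.} The coefficient of each target is a signed combination of $|T|!$, $K_U$ (which contains $-(|U|-1)!q^x$), and powers $q^{j-1}$. The claim is that it collapses to $\alpha_{T',b}$ and $\beta_{T'}$. I would verify this by fixing $T'$ and $b$, then tracking which element $x\in T'$ is "special", i.e. either the singleton in $K_U$ carrying $q^x$ or the absorbed element in Lemma~\ref{lemma6}. The residue $(c+x-b-1)\bmod x$ should arise as follows. When $b\le c-1+\cdots$, the exponent is $j-1=c+S-b-1$, shifted by the $q^x$ factor from $K_U$, and telescoping leaves a single power modulo $x$. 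The hypothesis $x\ge c$ guarantees that the geometric pieces $q^c+\cdots+q^{x-1}$ in $\beta$ have nonnegative length.

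\textbf{Main obstacle.} The hardest step is this coefficient bookkeeping in Steps~6--7, and in particular checking that the negative contributions cancel exactly. These come from $K_U$ and from the subtracted terms in Lemma~\ref{lemma6}. I would first test the case $|A|=1$, $A=\{x\}$, to pin down the combinatorics of the residue, and then argue that, for general $T'$, the factor $(|T'|-1)!$ and the sum over the special $x$ arise by symmetrizing over which element plays the role of $x$.
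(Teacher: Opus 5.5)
Your Steps~1--5 follow the paper's route: expand by Lemma~\ref{lemma2}, apply Lemma~\ref{lemma3}, push $\wtF_{A\setminus T}$ inside with Lemma~\ref{lemma5}, cancel the subtracted sum, then use Lemma~\ref{lemma6}. Your Step~4 identification of the subtracted sum with the $U=\emptyset$, $j\le c-1$ terms is correct, and it is the same cancellation the paper performs. You should also record one fact. For $U\neq\emptyset$ the surviving terms satisfy $b=c+S_T-j\le c+S_T-1<S_{T\cup U}$, because $S_U\ge c$; this is one place where $x\ge c$ is used.

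The genuine gap is Steps~6--7. There you only assert that the coefficients ``collapse'' to $\alpha_{T',b}$ and $\beta_{T'}$, and that collapse is the whole content of the theorem. None of the devices you mention is carried out: running Lemma~\ref{lemma2} backwards, reusing the permutation identity behind $L_U=K_U$ (already used up inside Lemma~\ref{lemma5}), or a ``telescoping modulo $x$''. The paper's computation uses none of them.

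Two explicit identities are missing.

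(a) The coefficient of $C_{c+S_T}(\wtF_{A\setminus T})$ is $\sum_{S\subseteq T}|S|!\,[c+S_S]_q\,K_{T\setminus S}$, and you must show it equals $\beta_T$. Compare coefficients of $q^d$, with $k=|T|$.
\begin{itemize}
\item For $d<c$, only the constant part $|T\setminus S|!$ of $K_{T\setminus S}$ contributes (because $x\ge c$), and $\sum_i\binom{k}{i}i!(k-i)!=(k+1)!$.
\item For $d\ge c$, write $|S|!=\sum_{y\in S}(|S|-1)!$ in the positive part, and set $R=S\cup\{y\}$ in the part coming from $-(|U|-1)!\sum_{y\in U}q^y$. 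Each pair $(R,y)$ with $y\in R$ then contributes $\chi(d<c+S_R)-\chi(y\le d<c+S_R)=\chi(d<y)$ times $(|R|-1)!(k-|R|)!$. Finally use $\sum_{R\ni y}(|R|-1)!(k-|R|)!=k!$.
\end{itemize}

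(b) The same pairing shows that the coefficient $L_{T,b}$ of $C_{c+S_T-b}(\wtF_{A\setminus T}F_b)$ equals $\chi(b<c)|T|!\,q^{c-b-1}$ plus $(|R|-1)!(|T|-|R|)!\,q^{c+S_R-b-1}$, summed over pairs with $c+S_R-y\le b<c+S_R$.
\begin{itemize}
\item After Lemma~\ref{lemma6} and the reindexing $(T,b)\mapsto(T\cup\{x\},b+x)$, the coefficient of $C_{c+S_T-b}(\wtF_{(A\setminus T)\cup\{b\}})$ is $L_{T,b}-\sum_{x\in T,\,x<b}L_{T\setminus\{x\},\,b-x}$.
\item The decisive cancellation: the $|R|\ge 2$ part of $L_{T,b}$ is matched exactly by the window terms of the shifted $L$'s via $R=R'\cup\{x\}$. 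Summing over $x\in R\setminus\{y\}$ turns $(|R|-2)!$ into $(|R|-1)!$.
\item What survives is $(|T|-1)!\bigl(\chi(b<c)\sum_{x\in T}q^{c-b-1}+\sum_{x\in T,\ c\le b\le x}q^{c+x-b-1}\bigr)$, which uses $x\ge c$ again.
\end{itemize}
The residue $(c+x-b-1)\bmod x$ is only a uniform way of writing these two cases.

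Without (a) and (b), your proposal is an outline of the paper's proof rather than a proof.
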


The proof of Theorem~\ref{thm1} is given in Section~4.2. Using
Theorem~\ref{thm1}, we now prove the following theorem.

\begin{theorem}\label{thm2}
    For any labelled multiset $A$ of positive integers, we have
    \[
        \wtF_A \in \sum_{\alpha\vDash S_A}\N[q]\, C_\alpha(1),
    \]
    where the sum ranges over all compositions $\alpha$ of $S_A$.
\end{theorem}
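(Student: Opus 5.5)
We prove Theorem~\ref{thm2} by strong induction, using Theorem~\ref{thm1} as the engine. Let $\mathcal{P}_n:=\sum_{\alpha\vDash n}\N[q]\,C_\alpha(1)$. We record two facts. First, $\mathcal{P}_n$ is closed under addition and under multiplication by elements of $\N[q]$. Second, since $C_\alpha(C_\beta(1))=C_{\alpha\beta}(1)$ for concatenated compositions, we have $C_j(\mathcal{P}_m)\subseteq\mathcal{P}_{m+j}$ for every $j\geq 1$. The base case is $A=\emptyset$, where $\wtF_\emptyset=1=C_\emptyset(1)\in\mathcal{P}_0$. We order labelled multisets first by $S_A$ and then, for fixed $S_A$, by the number of elements $\ell(A)$. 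The second coordinate is what lets us handle the term $\wtF_{(A\setminus T)\cup\{b\}}$ in \eqref{eqthmTheta}.

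For nonempty $A$, let $c=\min A$ and write $A=A'\cup\{c\}$, where $A'$ is $A$ with one copy of $c$ removed. Then every $x\in A'$ satisfies $x\geq c$, so Theorem~\ref{thm1} applies to $(A',c)$. Rearranging \eqref{eqdefTheta} gives
\[
\wtF_A=\sum_{j=1}^{c-1}q^{j-1}C_j\bigl(\wtF_{A'\cup\{c-j\}}\bigr)+\Theta_{A',c}.
\]
We check each term on the right.

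In the first sum, the multiset $A'\cup\{c-j\}$ has sum $S_A-j<S_A$. By induction its $\wtF$ lies in $\mathcal{P}_{S_A-j}$, so each summand lies in $\mathcal{P}_{S_A}$.

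For $\Theta_{A',c}$ we use \eqref{eqthmTheta}. All coefficients $\alpha_{T,b}$ and $\beta_T$ lie in $\N[q]$, and every $C$-index is positive, since $c+S_T-b\geq c\geq1$ because $b\leq S_T$. It therefore suffices that the inner arguments lie in the appropriate $\mathcal{P}$.

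\begin{itemize}
\item The term $\wtF_{A'\setminus T}$ has sum $S_A-c-S_T<S_A$, so induction applies directly.
\item The term $\wtF_{(A'\setminus T)\cup\{b\}}$ with $T\neq\emptyset$ and $1\leq b\leq S_T$ has sum $S_A-c-S_T+b\leq S_A-c<S_A$, since $c\geq 1$. So induction on $S_A$ alone suffices here, and the secondary ordering on $\ell(A)$ is not actually needed.
\end{itemize}

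Every term is thus a $\N[q]$-combination of $C_j$ applied to elements of $\mathcal{P}$ of total degree $S_A$, which places $\wtF_A$ in $\mathcal{P}_{S_A}$.

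The main obstacle is not in this deduction but in Theorem~\ref{thm1} itself, which is assumed here. The one point to verify is that the hypothesis $x\geq c$ for all $x\in A'$ holds. This is exactly why we choose $c=\min A$, and this choice always works, including when $c$ is repeated in $A$. A minor check is the case $c=1$, where the first sum is empty. The degree bookkeeping should also be confirmed: $C_{c+S_T-b}$ raises the degree from $S_A-c-S_T+b$ to exactly $S_A$, and $C_{c+S_T}$ raises $S_A-c-S_T$ to $S_A$.
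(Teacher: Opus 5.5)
Your proof is correct and is essentially the paper's argument. You make the same choice $c=\min A$, use the same rearrangement of \eqref{eqdefTheta} together with Theorem~\ref{thm1}, and rely on the same stability of the $\N[q]$-cone under each $C_j$. The only difference is the induction measure: you induct on $S_A$, which works because every $\wtF$ on the right-hand side sits inside some $C_j$ with $j\geq 1$ and therefore has strictly smaller degree, whereas the paper inducts lexicographically on $(\ell(A),\min A)$; both orderings are well-founded and either suffices.
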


\begin{proof}
    Let $\mathcal{C}^+$ be the set of all $\N[q]$-linear combinations of the $C_\alpha(1)$, where $\alpha$ ranges over all compositions.
    Each $C_j$ is linear and satisfies $C_j(C_\alpha(1))=C_{(j,\alpha_1,\ldots,\alpha_m)}(1)$, so $\mathcal{C}^+$ is stable under every operator $C_j$ with $j\geq 1$. Moreover, $\wtF_A$ is homogeneous of degree $S_A$ while $C_\alpha(1)$ is homogeneous of degree $|\alpha|$, so any expansion of $\wtF_A$ in $\mathcal{C}^+$ involves only compositions of $S_A$.

    The empty multiset gives $\wtF_\emptyset=1=C_{\emptyset}(1)$, which settles
    this case.  For nonempty $A$, we argue by induction on the pair
    $(\ell(A),\min(A))$, ordered lexicographically, where $\min(A)$ is the
    smallest value in $A$.

    Let $c=\min(A)$, choose one labelled element of value $c$, and let $A'=A\setminus\{c\}$. Then
    $A=A'\cup\{c\}$ and $x\geq c$ for all $x\in A'$.

    By the definition~\eqref{eqdefTheta} of $\Theta_{A',c}$,
    \begin{equation}\label{eqthm2rec}
        \wtF_A
        =
        \Theta_{A',c}
        +
        \sum_{j=1}^{c-1}q^{j-1}C_j(\wtF_{A'\cup\{c-j\}}).
    \end{equation}

    We first show that $\Theta_{A',c}\in\mathcal{C}^+$. By Theorem~\ref{thm1},
    \[
        \Theta_{A',c}
        =
        \sum_{T\subseteq A'}
        \left(
            \sum_{b=1}^{S_T}
            \alpha_{T,b}\,
            C_{c+S_T-b}(\wtF_{(A'\setminus T)\cup\{b\}})
            +
            \beta_T\,
            C_{c+S_T}(\wtF_{A'\setminus T})
        \right).
    \]
    The coefficients $\alpha_{T,b}$ and $\beta_T$ lie in $\N[q]$, and the operator subscripts $c+S_T-b\geq c\geq 1$ and $c+S_T\geq 1$ are positive integers. Moreover, every $\wtF$ term occurring with a nonzero coefficient has length less than $\ell(A)$: for $\wtF_{A'\setminus T}$ we have $\ell(A'\setminus T)=\ell(A)-1-|T|<\ell(A)$, while a nonzero $\alpha_{T,b}$ forces $T\neq\emptyset$ and hence $\ell\bigl((A'\setminus T)\cup\{b\}\bigr)=\ell(A)-|T|<\ell(A)$. By the induction hypothesis these terms lie in $\mathcal{C}^+$, and since $\mathcal{C}^+$ is stable under each $C_j$, it follows that $\Theta_{A',c}\in\mathcal{C}^+$.

    It remains to consider the terms $q^{j-1}C_j(\wtF_{A'\cup\{c-j\}})$ in \eqref{eqthm2rec}, for $1\leq j\leq c-1$. The multiset $A'\cup\{c-j\}$ has the same length as $A$ and smaller minimum $c-j<c$, so the induction hypothesis gives $\wtF_{A'\cup\{c-j\}}\in\mathcal{C}^+$ and hence $q^{j-1}C_j(\wtF_{A'\cup\{c-j\}})\in\mathcal{C}^+$. Therefore every term on the right-hand side of \eqref{eqthm2rec} lies in $\mathcal{C}^+$, and the theorem follows.
\end{proof}

\begin{remark}\label{remexamples}
The proof of Theorem~\ref{thm2} is effective: iterating the recursion
in \eqref{eqdefTheta} and \eqref{eqthmTheta} produces an explicit expansion. For
instance, writing $C_{(a_1,\ldots,a_m)}(1)=C_{a_1}C_{a_2}\cdots C_{a_m}(1)$,
the recursion gives
\begin{align*}
    -m_{(2)} &= C_{(1,1)}(1) + (1+q)\,C_{(2)}(1),\\
    -m_{(2,1)} &= 2\,C_{(1,1,1)}(1) + (2+2q)\,C_{(1,2)}(1) + q\,C_{(2,1)}(1) + (2+q)\,C_{(3)}(1).
\end{align*}
For $\mu=(1^n)$ we have $\wtF_{(1^n)}=n!\,e_n$, and for all $n\leq 7$
the expansion produced by the recursion is
$\wtF_{(1^n)}=n!\sum_{\alpha\vDash n}C_\alpha(1)$, recovering the identity
\eqref{eqHMZen} of Haglund, Morse and Zabrocki \cite{HMZ12}.
\end{remark}

\begin{remark}\label{remverification}
Using SageMath, we verified Theorem~\ref{thm2} symbolically over $\Q(q)$
for all $272$ partitions $\mu$ with $|\mu|\leq 12$.  In each case, iterating
the recursion of Theorem~\ref{thm1} gives coefficients
$b_{\mu,\alpha}\in\N[q]$ with
$\sum_\alpha b_{\mu,\alpha}C_\alpha(1)=\wtF_\mu=(-1)^{|\mu|-\ell(\mu)}\wtm_\mu$.
\end{remark}

We are now ready to prove Theorem~\ref{mainthm}.
\begin{proof}[Proof of Theorem~\ref{mainthm}]
The idea is that $(-1)^{|\mu|-\ell(\mu)}\langle\nabla^r m_\mu,s_\lambda\rangle$
is at once an integer polynomial, by \eqref{nablamZ}, and a nonnegative rational
one, by Theorem~\ref{thm2} and Proposition~\ref{propnablaC}; being both, it lies
in $\N[q,t]$.  We now give the details.

Let $\mu\vdash n$, and choose any labelled multiset $A$ with
$\mu(A)=\mu$. By Theorem~\ref{thm2},
\[
    \wtF_A=\Bigl(\prod_{i\geq 1}m_i(\mu)!\Bigr)\,(-1)^{|\mu|-\ell(\mu)}m_\mu
    = \sum_{\alpha\vDash n} b_{\mu,\alpha}\, C_\alpha(1),
    \qquad b_{\mu,\alpha}\in \N[q].
\]
Dividing by $\prod_{i\geq 1}m_i(\mu)!$, the signed monomial symmetric
function has a nonnegative $C$ expansion:
\[
    (-1)^{|\mu|-\ell(\mu)}m_\mu
    = \sum_{\alpha\vDash n} a_{\mu,\alpha}\, C_\alpha(1),
    \qquad a_{\mu,\alpha}\in
    \Bigl(\prod_{i\geq 1}m_i(\mu)!\Bigr)^{-1}\N[q]
    \subset \Q_{\ge 0}[q].
\]
Applying $\nabla^r$ and taking the Hall inner product with $s_\lambda$, we obtain
\[
    (-1)^{|\mu|-\ell(\mu)}
    \langle \nabla^r m_\mu, s_\lambda\rangle
    =
    \sum_{\alpha\vDash n} a_{\mu,\alpha}
    \langle \nabla^r C_\alpha(1), s_\lambda\rangle .
\]
By Proposition~\ref{propnablaC}, each coefficient
$\langle \nabla^r C_\alpha(1), s_\lambda\rangle$ lies in
$\N[q,t]$. Hence the left-hand side belongs to
$\Q_{\ge0}[q,t]$.

On the other hand, by \eqref{nablamZ},
\[
    (-1)^{|\mu|-\ell(\mu)}
    \langle \nabla^r m_\mu, s_\lambda\rangle \in \Z[q,t].
\]
Both conditions hold, so
\[
    (-1)^{|\mu|-\ell(\mu)}
    \langle \nabla^r m_\mu, s_\lambda\rangle \in \N[q,t]. \qedhere
\]
\end{proof}

\begin{proof}[Proof of Theorem~\ref{maine}]
The proof is parallel to that of Theorem~\ref{mainthm}.  By Theorem~\ref{thm2},
\[
    (-1)^{|\mu|-\ell(\mu)}m_\mu
    =
    \sum_{\alpha\vDash n} a_{\mu,\alpha}\, C_\alpha(1),
    \qquad
    a_{\mu,\alpha}\in \Q_{\ge0}[q].
\]
Applying $\nabla^r$, substituting $q\mapsto q+1$, and taking the Hall inner
product with $\omega(m_\lambda)$, we obtain
\[
    (-1)^{|\mu|-\ell(\mu)}
    \left\langle (\nabla^r m_\mu)[X;q+1], \omega(m_\lambda)\right\rangle =
    \sum_{\alpha\vDash n}
    a_{\mu,\alpha}(q+1)
    \left\langle
        (\nabla^r C_\alpha(1))[X;q+1], \omega(m_\lambda)
    \right\rangle.
\]
Since $a_{\mu,\alpha}\in\Q_{\ge0}[q]$, we have $a_{\mu,\alpha}(q+1)\in\Q_{\ge0}[q]$,
so by Proposition~\ref{propnablaC} the left-hand side lies in $\Q_{\ge0}[q,t]$.
On the other hand, it lies in $\Z[q,t]$ by \eqref{nablamZe}.  Hence it lies in
$\N[q,t]$, as desired.
\end{proof}

\subsection{Proof of Theorem~\ref{thm1}}
Throughout this subsection, $A$ and $c$ are as in Theorem~\ref{thm1}; in
particular $x\geq c\geq 1$ for every $x\in A$, and $K_\emptyset=1$.

\begin{lemma}\label{lemmaThetaIntermediate}
    Under the same assumptions and notation, we have
    \begin{equation}\label{eqThetaInt}
    \begin{aligned}
        \Theta_{A,c}
        &=
        \sum_{T\subseteq A}
        \sum_{b=1}^{S_T}
        \left(
            \sum_{\substack{S\subseteq T\\ c+S_S>b}}
            |S|!\,q^{c+S_S-b-1}K_{T\setminus S}
        \right)
        C_{c+S_T-b}\left(\wtF_{A\setminus T}F_b\right) \\
        &\quad+
        \sum_{T\subseteq A}
        \left(
            \sum_{S\subseteq T}
            |S|!\,[c+S_S]_q K_{T\setminus S}
        \right)
        C_{c+S_T}(\wtF_{A\setminus T}).
    \end{aligned}
    \end{equation}
\end{lemma}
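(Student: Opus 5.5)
We start from the definition \eqref{eqdefTheta} and expand both pieces of $\Theta_{A,c}$ in terms of operators $C_k$ applied to products $\wtF_{A\setminus T}F_b$ or $\wtF_{A\setminus T}$.

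\emph{First piece.} By Lemma~\ref{lemma2} with the new element $c$,
\[
\wtF_{A\cup\{c\}}=\sum_{S\subseteq A}|S|!\,\wtF_{A\setminus S}F_{c+S_S}.
\]
We then expand each $F_{c+S_S}$ by Lemma~\ref{lemma3}:
\[
F_{c+S_S}=\sum_{i=1}^{c+S_S-1}q^{i-1}C_i(F_{c+S_S-i})+[c+S_S]_qC_{c+S_S}(1).
\]
Next we move $\wtF_{A\setminus S}$ inside each $C_i$ using Lemma~\ref{lemma5} with $B=A\setminus S$. This gives a sum over $U\subseteq A\setminus S$ of $K_U\,C_{i+S_U}(\wtF_{A\setminus S\setminus U}\,F_{c+S_S-i})$, and similarly $K_U\,C_{c+S_S+S_U}(\wtF_{A\setminus S\setminus U})$ for the last term. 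Setting $T=S\cup U$ and $b=c+S_S-i$, the subscript becomes $i+S_U=c+S_T-b$. The conditions translate as follows: $1\le i\le c+S_S-1$ becomes $1\le b\le c+S_S-1$, and $q^{i-1}=q^{c+S_S-b-1}$. The $[c+S_S]_q$ terms produce exactly the second line of \eqref{eqThetaInt}, with $K_{T\setminus S}$.

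\emph{Second piece.} We treat $\sum_{j=1}^{c-1}q^{j-1}C_j(\wtF_{A\cup\{c-j\}})$ the same way. Lemma~\ref{lemma2} with the element $c-j$ gives $\sum_{S}|S|!\,\wtF_{A\setminus S}F_{c-j+S_S}$ inside $C_j$. We do \emph{not} apply Lemma~\ref{lemma5} here, since the $\wtF$ is already inside the operator. Writing $T=S$ and $b=c-j+S_S$, the term is $|T|!\,q^{c+S_T-b-1}C_{c+S_T-b}(\wtF_{A\setminus T}F_b)$, where $1\le j\le c-1$ means $S_T+1\le b\le c+S_T-1$. This matches the first-piece terms with $S=T$ (so $K_\emptyset=1$) in the range $b>S_T$, and the two cancel exactly. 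What survives from the first piece is precisely the range $1\le b\le S_T$ together with the constraint $b<c+S_S$. That is the first line of \eqref{eqThetaInt}.

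\emph{Main obstacle.} The substantive step is the bookkeeping of the reindexing $(S,U,i)\mapsto(T,S,b)$. One must check that it is a bijection onto $\{(T,S,b): S\subseteq T\subseteq A,\ 1\le b<c+S_S\}$, and that the subtracted sum cancels exactly the pairs with $S=T$ and $b>S_T$. One must also confirm that no pair with $S\subsetneq T$ has $b>S_T$; this holds because $b<c+S_S\le S_S+\min(A)\le S_T$ whenever $T\setminus S\neq\emptyset$, using the hypothesis $x\ge c$. This hypothesis is exactly what makes the upper limit $S_T$ on $b$ correct. A brief remark should also address the case $c+S_S=1$, where the sum in Lemma~\ref{lemma3} is empty.
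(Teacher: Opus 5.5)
Your proposal is correct and follows the paper's proof essentially step for step. Like the paper, you expand $\wtF_{A\cup\{c\}}$ via Lemmas~\ref{lemma2} and~\ref{lemma3}, move $\wtF_{A\setminus S}$ inside the operator with Lemma~\ref{lemma5}, and reindex by $T=S\cup U$, $b=c+S_S-i$. You then expand the subtracted sum using Lemma~\ref{lemma2} alone and cancel it against the $S=T$, $b>S_T$ terms, using $x\ge c$ to rule out $b>S_T$ when $S\subsetneq T$.
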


\begin{proof}
    By \eqref{eqdefTheta}, $\Theta_{A,c}=\wtF_{A\cup\{c\}}-\sum_{j=1}^{c-1}q^{j-1}C_j(\wtF_{A\cup\{c-j\}})$.
    We expand the two terms on the right-hand side in turn, starting with
    $\wtF_{A\cup\{c\}}$.  By Lemmas~\ref{lemma2} and~\ref{lemma3},
    \begin{align*}
        \wtF_{A\cup\{c\}}
        &=
        \sum_{S\subseteq A}|S|!\,\wtF_{A\setminus S}F_{c+S_S} \\
        &=
        \sum_{S\subseteq A}
        \sum_{b=1}^{c+S_S-1}
        |S|!\,q^{c+S_S-b-1}
        \wtF_{A\setminus S}C_{c+S_S-b}(F_b) +
        \sum_{S\subseteq A}
        |S|!\,[c+S_S]_q\,
        \wtF_{A\setminus S}C_{c+S_S}(1),
    \end{align*}
    where in the second line we applied \eqref{eqlemma3} with $n=c+S_S$ and
    reindexed the sum by $b=n-j$.
    We now apply Lemma~\ref{lemma5} with $B=A\setminus S$ to move each
    factor $\wtF_{A\setminus S}$ past the $C$ operator. If the subset
    absorbed in this step is $T\setminus S$ for some
    $S\subseteq T\subseteq A$, then the operator subscript increases by
    $S_{T\setminus S}$, becoming $c+S_T-b$ (respectively $c+S_T$), and a
    factor $K_{T\setminus S}$ is introduced. Thus
    \begin{equation}\label{eqfirstexp}
    \begin{aligned}
        \wtF_{A\cup\{c\}}
        &=
        \sum_{T\subseteq A}
        \sum_{b\geq 1}
        \left(
            \sum_{\substack{S\subseteq T\\ c+S_S>b}}
            |S|!\,q^{c+S_S-b-1}K_{T\setminus S}
        \right)
        C_{c+S_T-b}\left(\wtF_{A\setminus T}F_b\right) \\
        &\quad+
        \sum_{T\subseteq A}
        \left(
            \sum_{S\subseteq T}
            |S|!\,[c+S_S]_qK_{T\setminus S}
        \right)
        C_{c+S_T}(\wtF_{A\setminus T}).
    \end{aligned}
    \end{equation}

    The second sum of \eqref{eqfirstexp} is unchanged by the subtraction in
    \eqref{eqdefTheta} and gives the second sum of \eqref{eqThetaInt}.  We now
    expand the second term in \eqref{eqdefTheta}.  Again by Lemma~\ref{lemma2},
    \begin{equation}\label{eqsecondexp}
    \begin{aligned}
        \sum_{j=1}^{c-1}q^{j-1}C_j(\wtF_{A\cup\{c-j\}})
        &=
        \sum_{j=1}^{c-1}
        \sum_{S\subseteq A}
        q^{j-1}|S|!\,
        C_j\left(\wtF_{A\setminus S}F_{c-j+S_S}\right) \\
        &=
        \sum_{S\subseteq A}
        \sum_{b=S_S+1}^{c+S_S-1}
        |S|!\,q^{c+S_S-b-1}
        C_{c+S_S-b}
        \left(\wtF_{A\setminus S}F_b\right).
    \end{aligned}
    \end{equation}
    Here the second equality results from setting $b=c-j+S_S$ (equivalently
    $j=c+S_S-b$) to match the form of \eqref{eqfirstexp}.

    It remains to treat the first sum of \eqref{eqfirstexp}.  Fix
    $T\subseteq A$ and $b\geq1$.  The coefficient
    $\sum_{\substack{S\subseteq T\\ c+S_S>b}}|S|!\,q^{c+S_S-b-1}K_{T\setminus S}$
    of $C_{c+S_T-b}(\wtF_{A\setminus T}F_b)$ is nonzero only for
    $b\leq c+S_T-1$, and the summand indexed by $S=T$ is
    $|T|!\,q^{c+S_T-b-1}$, since $K_\emptyset=1$.  Let $S\subsetneq T$.
    Since $T\setminus S\neq\emptyset$ and every element of $A$ is at least
    $c$, we have $S_{T\setminus S}\geq c$.  Hence for $b\geq S_T+1$,
    \[
        c+S_S=c+S_T-S_{T\setminus S}\leq S_T<b,
    \]
    which contradicts the requirement $c+S_S>b$; the summand indexed by $S$ is
    then absent, and the coefficient reduces to the summand indexed by $S=T$.
    The part of the first sum of \eqref{eqfirstexp} with $b\geq S_T+1$ is
    therefore
    \[
        \sum_{T\subseteq A}\sum_{b=S_T+1}^{c+S_T-1}
        |T|!\,q^{c+S_T-b-1}\,
        C_{c+S_T-b}\left(\wtF_{A\setminus T}F_b\right),
    \]
    which is \eqref{eqsecondexp} after renaming the summation variable $S$ as
    $T$.  Subtracting \eqref{eqsecondexp} from the first sum of
    \eqref{eqfirstexp} removes its part with $b\geq S_T+1$, leaving the terms
    with $b\leq S_T$.

    Combining these surviving terms with the second sum of \eqref{eqfirstexp}
    gives
    \begin{align*}
        \Theta_{A,c}
        &=
        \sum_{T\subseteq A}
        \sum_{b=1}^{S_T}
        \left(
            \sum_{\substack{S\subseteq T\\ c+S_S>b}}
            |S|!\,q^{c+S_S-b-1}K_{T\setminus S}
        \right)
        C_{c+S_T-b}\left(\wtF_{A\setminus T}F_b\right) \\
        &\quad+
        \sum_{T\subseteq A}
        \left(
            \sum_{S\subseteq T}
            |S|!\,[c+S_S]_q K_{T\setminus S}
        \right)
        C_{c+S_T}(\wtF_{A\setminus T}).
        \qedhere
    \end{align*}
\end{proof}

\begin{lemma}\label{lemmaBeta}
    With the notation above, for every $T\subseteq A$ we have
    \[
        \sum_{S\subseteq T}
        |S|!\,[c+S_S]_q K_{T\setminus S}
        =
        (|T|+1)![c]_q
        +
        |T|!\sum_{x\in T}
        \left(q^c+q^{c+1}+\cdots+q^{x-1}\right)
        =\beta_T.
    \]
\end{lemma}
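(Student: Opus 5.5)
The plan is to use $[c+S_S]_q=\dfrac{1-q^{c+S_S}}{1-q}$, which turns the left-hand side into
\[
\frac{1}{1-q}\Bigl(\Sigma_1-q^c\,\Sigma_2\Bigr),\qquad
\Sigma_1:=\sum_{S\subseteq T}|S|!\,K_{T\setminus S},\qquad
\Sigma_2:=\sum_{S\subseteq T}|S|!\,q^{S_S}K_{T\setminus S}.
\]
I will evaluate $\Sigma_1$ and $\Sigma_2$ in closed form separately. Put $n=|T|$ throughout.

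For $\Sigma_1$, insert the definition \eqref{eqdefKU} of $K_U$. The convention $K_\emptyset=1$ agrees with the formula $|U|!-(|U|-1)!\sum_{x\in U}q^x$ read with an empty sum, so no case split is needed.

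The first piece is $\sum_{S\subseteq T}|S|!\,|T\setminus S|!$. Grouping by $|S|=k$ gives $\sum_{k=0}^n\binom nk k!(n-k)!=(n+1)\,n!=(n+1)!$.

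For the second piece, fix $x\in T$ and sum over $S\subseteq T\setminus\{x\}$. The coefficient of $q^x$ is then $\sum_{k=0}^{n-1}\binom{n-1}{k}k!(n-1-k)!=n!$. Hence
\[
\Sigma_1=(n+1)!-n!\sum_{x\in T}q^x .
\]

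For $\Sigma_2$ I will use the cycle interpretation obtained in the proof of Lemma~\ref{lemma5}, namely $K_U=L_U=\sum_{\sigma\in\mathfrak S_U}\prod_{c\in\Cyc(\sigma)}(1-q^{S_c})$. Write $|S|!\,q^{S_S}$ as $\sum_{\tau\in\mathfrak S_S}\prod_{c\in\Cyc(\tau)}q^{S_c}$.

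Then $\Sigma_2$ becomes a sum over permutations $\rho\in\mathfrak S_T$, together with a choice, for each cycle of $\rho$, of whether it lies in $S$ (weight $q^{S_c}$) or in $T\setminus S$ (weight $1-q^{S_c}$). Summing over these choices gives $\prod_{c}\bigl(q^{S_c}+1-q^{S_c}\bigr)=1$ for each $\rho$, so $\Sigma_2=n!$. This is the step I expect to be the crux. If the bijection between pairs $(\tau,\sigma)$ and pairs $(\rho,\text{cycle 2-colouring})$ needs justification, I would check it directly: $\rho$ is determined by $\tau\sqcup\sigma$, and the colouring records which cycles come from $S$. As a sanity check, $T=\{x\}$ gives $(1-q^x)+q^x=1$.

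Finally, combine the two evaluations:
\[
\Sigma_1-q^c\Sigma_2=(n+1)!-n!\sum_{x\in T}q^x-n!\,q^c
=(n+1)!\,(1-q^c)+n!\sum_{x\in T}(q^c-q^x),
\]
using $(n+1)!\,q^c-n!\,q^c=n\cdot n!\,q^c=n!\sum_{x\in T}q^c$. Dividing by $1-q$ gives $(n+1)![c]_q$ from the first term. The hypothesis $x\ge c$ for all $x\in A$ gives $(q^c-q^x)/(1-q)=q^c+\cdots+q^{x-1}$, which is zero when $x=c$. This yields exactly $\beta_T$.
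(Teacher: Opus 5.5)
Your argument is correct, but it takes a different route from the paper.

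\textbf{The paper's route.} The paper never resums the $q$-integers. It compares coefficients of $q^d$ on both sides, treating two cases separately.
\begin{itemize}
\item For $d<c$, the hypothesis $x\ge c$ ensures that only the constant terms $|T\setminus S|!$ of the $K_{T\setminus S}$ contribute, which gives $(k+1)!$.
\item For $d\ge c$, it expands $|S|!=\sum_{y\in S}(|S|-1)!$, substitutes $R=S\cup\{y\}$ in the negative part, and cancels down to $k!\,\#\{y\in T: d<y\}$.
\end{itemize}

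\textbf{Your route.} You write $[c+S_S]_q=(1-q^{c+S_S})/(1-q)$ and reduce everything to the closed forms $\Sigma_1=(n+1)!-n!\sum_{x\in T}q^x$ and $\Sigma_2=n!$. Each step checks, including the bijection between triples $(S,\tau,\sigma)$ and pairs consisting of $\rho\in\mathfrak S_T$ and a set of cycles of $\rho$.

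\textbf{What each buys.} Your route is shorter and more informative. It gives
\[
\Sigma_1-q^c\Sigma_2=n!\bigl((1-q^c)+\textstyle\sum_{x\in T}(1-q^x)\bigr),
\]
so the left-hand side equals $|T|!\bigl([c]_q+\sum_{x\in T}[x]_q\bigr)$ for \emph{every} $T$. This form is visibly in $\N[q]$, and the hypothesis $x\ge c$ is needed only to rewrite it as the paper's expression for $\beta_T$. The paper's coefficientwise argument, in exchange, uses only the defining formula of $K_U$. It also uses the same $R=S\cup\{y\}$ cancellation that it later reuses for $L_{T,b}$ in Lemma~\ref{lemmaAlphaCoeff}, where the constraint $c+S_S>b$ leaves no analogous closed form.

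\textbf{A simplification.} That same device computes your $\Sigma_2$ without the cycle interpretation from Lemma~\ref{lemma5}. After substituting $R=S\cup\{y\}$, the negative part of $\Sigma_2$ becomes $\sum_{\emptyset\neq R\subseteq T}|R|!\,(n-|R|)!\,q^{S_R}$. This cancels every term of $\sum_{S\subseteq T}|S|!\,|T\setminus S|!\,q^{S_S}$ except the one with $S=\emptyset$, leaving $n!$.
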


\begin{proof}
    Recall from~\eqref{eqdefKU} that
    \[
        K_{T\setminus S}=|T\setminus S|!-(|T\setminus S|-1)!\sum_{y\in T\setminus S}q^y .
    \]
    Let $|T|=k$. We compare the coefficients of $q^d$ on both sides.

    First suppose that $d<c$. Since $x\geq c$ for every $x\in T$, the definition of $K_{T\setminus S}$ shows that $K_{T\setminus S}$ contains no term of degree $d$ other than its constant term $|T\setminus S|!$. Hence the coefficient of $q^d$ on the left-hand side is
    \[
        \sum_{S\subseteq T}|S|!\,|T\setminus S|!
        =
        \sum_{i=0}^k \binom{k}{i}i!(k-i)!
        =
        (k+1)!.
    \]
    This agrees with the coefficient of $q^d$ on the right-hand side.

    Now suppose that $d\geq c$. We write
    \[
        \sum_{S\subseteq T}|S|!\,[c+S_S]_q K_{T\setminus S}
        =
        \sum_{S\subseteq T}|S|!\,[c+S_S]_q\,|T\setminus S|!
        -
        \sum_{\substack{S\subseteq T\\ y\in T\setminus S}}|S|!\,[c+S_S]_q(|T\setminus S|-1)!\,q^y .
    \]

    Since $[c+S_S]_q=1+q+\cdots+q^{c+S_S-1}$ has coefficient $1$ at $q^d$
    when $d<c+S_S$ and $0$ otherwise, the positive contribution to the
    coefficient of $q^d$ is
    \[
        \sum_{\substack{S\subseteq T\\ d<c+S_S}}|S|!\,|T\setminus S|!
        =
        \sum_{\substack{\emptyset\neq S\subseteq T\\ d<c+S_S}}\,
        \sum_{y\in S}(|S|-1)!\,|T\setminus S|!
        =
        \sum_{\substack{\emptyset\neq R\subseteq T,\ y\in R\\ d<c+S_R}}
        (|R|-1)!\,(|T|-|R|)! .
    \]
    The first equality drops the $S=\emptyset$ term, which vanishes as
    $d\geq c$, and expands $|S|!=\sum_{y\in S}(|S|-1)!$; the second relabels
    $S$ as $R$, putting the positive contribution in the same form as the
    negative one below so that the two can be subtracted.

    For the negative contribution, set $R=S\cup\{y\}$, where $y$ is the element chosen from $T\setminus S$. The term
    $q^y[c+S_S]_q$ contains $q^d$ if and only if $y\leq d<c+S_S+y=c+S_R$.

    Hence the negative contribution is
    \[
        \sum_{\substack{\emptyset\neq R\subseteq T,\ y\in R\\ y\leq d<c+S_R}}
        (|R|-1)!(|T|-|R|)!.
    \]

    After subtracting the negative contribution from the positive contribution, only the terms with $d<y$ remain. Moreover, if $d<y$, then $d<c+S_R$ automatically. Therefore the coefficient of $q^d$ on the left-hand side is
    \[
        \sum_{\substack{y\in T\\ d<y}}
        \sum_{\substack{R\subseteq T\\ y\in R}}
        (|R|-1)!(|T|-|R|)!.
    \]
    For each fixed $y\in T$, we have
    \begin{equation*}
        \sum_{\substack{R\subseteq T\\ y\in R}}
        (|R|-1)!(|T|-|R|)!
        =
        \sum_{W\subseteq T\setminus\{y\}}
        |W|!(k-1-|W|)!
        =
        \sum_{i=0}^{k-1}
        \binom{k-1}{i}i!(k-1-i)!
        =
        k!.
    \end{equation*}
    Thus, for $d\geq c$, the coefficient of $q^d$ on the left-hand side is $k!\,\#\{y\in T: d<y\}$.
    On the other hand, $[c]_q$ has degree less than $c$, so for $d\geq c$
    the coefficient of $q^d$ on the right-hand side is
    \[
        \left\langle q^d\right\rangle
        k!\sum_{x\in T}
        \left(q^c+q^{c+1}+\cdots+q^{x-1}\right)
        =
        k!\,\#\{x\in T: d<x\}.
    \]
    This matches the left-hand side, so the coefficients of $q^d$ agree for
    $d\geq c$ as well, proving the lemma.
\end{proof}

In the remainder of this section the coefficient of
$C_{c+S_T-b}(\wtF_{A\setminus T}F_b)$ in the first sum of \eqref{eqThetaInt}
appears repeatedly, so we let
\[
    L_{T,b}
    :=
    \sum_{\substack{S\subseteq T\\ c+S_S>b}}
    |S|!\,q^{c+S_S-b-1}K_{T\setminus S}.
\]

\begin{lemma}\label{lemmaAlphaCoeff}
    With the notation above, for every $T\subseteq A$ and every integer $b$, we have
    \begin{equation}\label{eqlemmaAlphaCoeff}
        L_{T,b}
        =
        \chi(b<c)\,|T|!\,q^{c-b-1}
        +
        \sum_{\emptyset\neq R\subseteq T}
        (|R|-1)!(|T|-|R|)!
        \sum_{\substack{y\in R\\ c+S_R-y\leq b<c+S_R}}
        q^{c+S_R-b-1}.
    \end{equation}
\end{lemma}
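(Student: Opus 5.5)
We will follow the computation in the proof of Lemma~\ref{lemmaBeta}, now tracking a single power of $q$ instead of summing a $q$-integer. Substituting the definition \eqref{eqdefKU} of $K_{T\setminus S}$ into $L_{T,b}$ splits it into a positive and a negative part:
\[
L_{T,b}=\sum_{\substack{S\subseteq T\\ c+S_S>b}}|S|!\,|T\setminus S|!\,q^{c+S_S-b-1}
-\sum_{\substack{S\subsetneq T,\ y\in T\setminus S\\ c+S_S>b}}|S|!\,(|T\setminus S|-1)!\,q^{c+S_S+y-b-1}.
\]
The term $S=\emptyset$ of the positive part is present exactly when $c>b$. It contributes $|T|!\,q^{c-b-1}$, which is the term $\chi(b<c)\,|T|!\,q^{c-b-1}$ in \eqref{eqlemmaAlphaCoeff}.

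For the remaining positive terms, with $S\neq\emptyset$, we expand $|S|!=\sum_{y\in S}(|S|-1)!$ and rename $S$ as $R$. This gives
\[
\sum_{\emptyset\neq R\subseteq T}(|R|-1)!\,(|T|-|R|)!\sum_{\substack{y\in R\\ b<c+S_R}}q^{c+S_R-b-1}.
\]
In the negative part we set $R=S\cup\{y\}$, a nonempty subset of $T$ containing $y$. Then $|S|!=(|R|-1)!$, $(|T\setminus S|-1)!=(|T|-|R|)!$, and the exponent becomes $c+S_R-b-1$. The condition $c+S_S>b$ becomes $b<c+S_R-y$. The pair $(S,y)$ corresponds bijectively to the pair $(R,y)$ with $y\in R$. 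So the negative part equals
\[
\sum_{\emptyset\neq R\subseteq T}(|R|-1)!\,(|T|-|R|)!\sum_{\substack{y\in R\\ b<c+S_R-y}}q^{c+S_R-b-1}.
\]

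Subtracting, for each fixed $(R,y)$ the summands are identical monomials $q^{c+S_R-b-1}$ with the same weight. The negative condition $b<c+S_R-y$ implies the positive condition $b<c+S_R$, so the difference keeps exactly the pairs with $c+S_R-y\le b<c+S_R$. This is the second term of \eqref{eqlemmaAlphaCoeff}.

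No hypothesis $x\geq c$ is needed, and the identity holds for every integer $b$. The only point needing care is the bookkeeping of the bijection $(S,y)\leftrightarrow(R,y)$ and the factorial weights. The main thing to check is the boundary case: $S=\emptyset$ must be kept separate in the positive part, since $|S|!=\sum_{y\in S}(|S|-1)!$ fails for the empty set. In the negative part, $S=\emptyset$ is harmless, since then $R=\{y\}$ is nonempty.
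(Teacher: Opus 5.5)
Your proposal is correct and is essentially the paper's own argument. You substitute $K_{T\setminus S}$, isolate the $S=\emptyset$ term as $\chi(b<c)\,|T|!\,q^{c-b-1}$, expand $|S|!=\sum_{y\in S}(|S|-1)!$ in the positive part, reindex the negative part by $R=S\cup\{y\}$, and cancel the common range $b<c+S_R-y$, which is contained in $b<c+S_R$ because $y>0$.
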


\begin{proof}
    Using the definition of $K_{T\setminus S}$, we obtain
    \[
        L_{T,b} =
        \sum_{\substack{S\subseteq T\\ c+S_S>b}} |S|!\,|T\setminus S|!\,q^{c+S_S-b-1}  -
        \sum_{\substack{S\subseteq T,\ c+S_S>b\\ y\in T\setminus S}} |S|!\,(|T\setminus S|-1)!\, q^{c+S_S+y-b-1}.
    \]

    We first rewrite the positive part. The contribution from $S=\emptyset$ is
    $\chi(b<c)\,|T|!\,q^{c-b-1}$.
    For $S\neq\emptyset$, we use $|S|! = \sum_{y\in S}(|S|-1)!$.

    Thus the positive part becomes
    \begin{equation}\label{eqPosPart}
        \chi(b<c)\,|T|!\,q^{c-b-1}
        +
        \sum_{\substack{\emptyset\neq R\subseteq T,\ y\in R\\ b<c+S_R}}
        (|R|-1)!(|T|-|R|)!\,
        q^{c+S_R-b-1}.
    \end{equation}

    We now rewrite the negative part. Set $R=S\cup\{y\}$,
    where $y\in T\setminus S$. Then
    \[
        S_R=S_S+y,
        \qquad
        |S|!= (|R|-1)!,
        \qquad
        (|T\setminus S|-1)!=(|T|-|R|)!.
    \]
    Moreover, the condition $c+S_S>b$ becomes
    $b<c+S_R-y$.
    Hence the negative part is
    \begin{equation}\label{eqNegPart}
        \sum_{\substack{\emptyset\neq R\subseteq T,\ y\in R\\ b<c+S_R-y}}
        (|R|-1)!(|T|-|R|)!\,
        q^{c+S_R-b-1}.
    \end{equation}

    After subtracting the negative part \eqref{eqNegPart} from the positive part
    \eqref{eqPosPart}, only the range
    $c+S_R-y\leq b<c+S_R$ remains for each fixed pair $(R,y)$. Therefore
    \[
        L_{T,b}
        =
        \chi(b<c)\,|T|!\,q^{c-b-1}
        +
        \sum_{\emptyset\neq R\subseteq T}
        (|R|-1)!(|T|-|R|)!
        \sum_{\substack{y\in R\\ c+S_R-y\leq b<c+S_R}}
        q^{c+S_R-b-1}.
        \qedhere
    \]
\end{proof}

Furthermore, we have the following lemma.

\begin{lemma}\label{lemmadfinal}
    With the notation above, we have
    \begin{equation}\label{eqlemmadfinal}
        \sum_{T\subseteq A}
        \sum_{b=1}^{S_T}
        L_{T,b}\,
        C_{c+S_T-b}
        \left(\wtF_{A\setminus T}F_b\right)
        =
        \sum_{T\subseteq A}
        \sum_{b=1}^{S_T}
        \alpha_{T,b}\,
        C_{c+S_T-b}
        \bigl(\wtF_{(A\setminus T)\cup\{b\}}\bigr) .
    \end{equation}
\end{lemma}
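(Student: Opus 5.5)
The plan is to expand the right-hand side of Lemma~\ref{lemma6} inside the operator, reindex so that everything is written in terms of $C_{c+S_T-b}(\wtF_{(A\setminus T)\cup\{b\}})$, and reduce \eqref{eqlemmadfinal} to a coefficient identity for $L_{T,b}$.

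\textbf{Step 1: apply Lemma~\ref{lemma6}.} With $B=A\setminus T$, Lemma~\ref{lemma6} and linearity of $C_{c+S_T-b}$ rewrite the left-hand side of \eqref{eqlemmadfinal} as
\[
\sum_{T\subseteq A}\sum_{b=1}^{S_T}L_{T,b}\,C_{c+S_T-b}\bigl(\wtF_{(A\setminus T)\cup\{b\}}\bigr)
-\sum_{T\subseteq A}\sum_{b=1}^{S_T}\sum_{x\in A\setminus T}L_{T,b}\,C_{c+S_T-b}\bigl(\wtF_{(A\setminus(T\cup\{x\}))\cup\{x+b\}}\bigr).
\]

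\textbf{Step 2: reindex the subtracted sum.} Put $T'=T\cup\{x\}$ and $b'=x+b$. Then $c+S_T-b=c+S_{T'}-b'$, and the range $1\le b\le S_T$ becomes $x+1\le b'\le S_{T'}$. Each term therefore has the form $C_{c+S_{T'}-b'}(\wtF_{(A\setminus T')\cup\{b'\}})$ with $1\le b'\le S_{T'}$, the same shape as in the first sum. The lemma then reduces to the coefficient identity
\begin{equation*}
L_{T,b}-\sum_{\substack{x\in T\\ x<b}}L_{T\setminus\{x\},\,b-x}=\alpha_{T,b}\qquad(T\subseteq A,\ 1\le b\le S_T).
\end{equation*}
This suffices without needing linear independence of the $C$-terms, since we only need the two sides to agree term by term. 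For $T=\emptyset$ the range of $b$ is empty, so assume $T\neq\emptyset$.

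\textbf{Step 3: verify the identity with Lemma~\ref{lemmaAlphaCoeff}.} I would substitute \eqref{eqlemmaAlphaCoeff} for every $L$ in the identity.

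In the subtracted term $L_{T\setminus\{x\},b-x}$, the double sum runs over $R\subseteq T\setminus\{x\}$ and $y\in R$. Rewrite it with $R'=R\cup\{x\}\ni x$. The exponent becomes $c+S_{R'}-b-1$, unchanged in form, and the window becomes $c+S_{R'}-y\le b-x+x<c+S_{R'}$ with $y\neq x$. However, the weight changes from $(|R|-1)!(|T|-1-|R|)!$ to $(|R'|-2)!(|T|-|R'|)!$. I would then combine over $x\in R'$ and $y\in R'\setminus\{x\}$ and compare with the $(R',y)$ terms of $L_{T,b}$, whose weight is $(|R'|-1)!$.

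The hoped-for outcome is as follows. For each $(R',y)$ there are $|R'|-1$ choices of $x$, which cancel exactly against the terms of $L_{T,b}$ with $y\neq$ the pivot. The constraint $x<b$ should be automatic in the window, since $b\ge c+S_{R'}-y\ge c+x$ when $x\neq y$ (all elements are at least $c$). What should survive is $R=\{y\}$-type contributions, with exponent $c+y-b-1$ and window $c\le b<c+y$.

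The boundary terms $\chi(b-x<c)(|T|-1)!\,q^{c+x-b-1}$ from the subtracted $L$'s, together with $\chi(b<c)|T|!\,q^{c-b-1}$, must then supply the remaining pieces. These should give $(|T|-1)!\sum_{x\in T,\,b\le x}q^{c+x-b-1}$ when $b\ge c$, and the wrapped exponent $q^{c-b-1}$ when $b<c$. This wrap is exactly the reduction $(c+x-b-1)\bmod x$ in \eqref{eqdefalpha}.

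\textbf{Main obstacle.} The hard step is this last bookkeeping. The factorial weights $(|R|-1)!(|T|-|R|)!$ must telescope against the $|R'|-1$ choices of $x$. The three regimes $b<c$, $c\le b\le x$, and $b>x$ must be separated carefully for each $x\in T$. To make the cancellation transparent, I would first fix $b$ and compare coefficients of each power $q^d$, as in the proof of Lemma~\ref{lemmaBeta}, using $\sum_{R\ni y}(|R|-1)!(|T|-|R|)!=|T|!$ and its analogue for $R\supseteq\{x,y\}$.
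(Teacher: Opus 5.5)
Your proposal is correct and follows the paper's proof essentially step for step. You apply Lemma~\ref{lemma6}, reindex via $T\mapsto T\setminus\{x\}$, $b\mapsto b-x$ to reduce to $L_{T,b}-\sum_{x\in T,\,x<b}L_{T\setminus\{x\},b-x}=\alpha_{T,b}$, then substitute Lemma~\ref{lemmaAlphaCoeff} and absorb $x$ via $R=R'\cup\{x\}$.

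The step you flag as the main obstacle is immediate and needs no comparison of $q^d$-coefficients. Summing $(|R|-2)!$ over the $|R|-1$ choices of $x\in R\setminus\{y\}$ gives exactly $(|R|-1)!$, which cancels all $|R|\ge 2$ terms of $L_{T,b}$ term by term. What remains is the interval identity $\{b: c\le b<c+x\}\setminus\{b: x<b<c+x\}=\{b: c\le b\le x\}$, which uses $x\ge c$. For $b<c$, the only surviving term is $|T|!\,q^{c-b-1}=(|T|-1)!\sum_{x\in T}q^{c-b-1}$.
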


\begin{proof}
    Recall that
    \[
        L_{T,b}
        =
        \sum_{\substack{S\subseteq T\\ c+S_S>b}}
        |S|!\,q^{c+S_S-b-1}K_{T\setminus S}.
    \]
    By Lemma~\ref{lemma6} with $B=A\setminus T$, we have
    \begin{equation}\label{eqexpandFb}
        \wtF_{A\setminus T}F_b
        =
        \wtF_{(A\setminus T)\cup\{b\}}
        -
        \sum_{x\in A\setminus T}
        \wtF_{(A\setminus (T\cup\{x\}))\cup\{x+b\}}.
    \end{equation}

    Applying \eqref{eqexpandFb} to each summand on the left-hand side of
    \eqref{eqlemmadfinal} and using the linearity of $C_{c+S_T-b}$, we obtain
    \begin{equation}\label{eqLHSexpand}
    \begin{split}
        \sum_{T\subseteq A}\sum_{b=1}^{S_T}
        L_{T,b}\,C_{c+S_T-b}\bigl(\wtF_{A\setminus T}F_b\bigr)
        &=
        \sum_{T\subseteq A}\sum_{b=1}^{S_T}
        L_{T,b}\,C_{c+S_T-b}\bigl(\wtF_{(A\setminus T)\cup\{b\}}\bigr) \\
        &\quad-
        \sum_{T\subseteq A}\sum_{b=1}^{S_T}\sum_{x\in A\setminus T}
        L_{T,b}\,C_{c+S_T-b}\bigl(\wtF_{(A\setminus (T\cup\{x\}))\cup\{x+b\}}\bigr).
    \end{split}
    \end{equation}
    Reindex the triple sum by substituting $T\setminus\{x\}$ for $T$ and
    $b-x$ for $b$, keeping $x$: the condition $x\in A\setminus T$ becomes
    $x\in T$, and $1\leq b\leq S_T$ becomes
    $1\leq b-x\leq S_{T\setminus\{x\}}=S_T-x$, i.e., $x<b\leq S_T$.  Since
    \[
        c+S_{T\setminus\{x\}}-(b-x)=c+(S_T-x)-(b-x)=c+S_T-b
    \]
    and $\bigl(A\setminus((T\setminus\{x\})\cup\{x\})\bigr)\cup\{(b-x)+x\}=(A\setminus T)\cup\{b\}$,
    the triple sum equals
    \[
        \sum_{T\subseteq A}\sum_{b=1}^{S_T}
        \biggl(\,\sum_{\substack{x\in T\\ x<b}}L_{T\setminus\{x\},\,b-x}\biggr)
        C_{c+S_T-b}\bigl(\wtF_{(A\setminus T)\cup\{b\}}\bigr).
    \]
    Substituting this into \eqref{eqLHSexpand}, the left-hand side of
    \eqref{eqlemmadfinal} equals
    \begin{equation}\label{eqgammadef}
        \sum_{T\subseteq A}\sum_{b=1}^{S_T}
        \gamma_{T,b}\,C_{c+S_T-b}\bigl(\wtF_{(A\setminus T)\cup\{b\}}\bigr),
    \end{equation}
    where $\gamma_{T,b}:=L_{T,b}-\sum_{\substack{x\in T\\ x<b}}L_{T\setminus\{x\},\,b-x}$.
    Comparing \eqref{eqgammadef} with the right-hand side of
    \eqref{eqlemmadfinal}, it suffices to prove that $\gamma_{T,b}=\alpha_{T,b}$
    for all $T\subseteq A$ and $b\leq S_T$, where, by \eqref{eqdefalpha} and
    $T\neq\emptyset$,
    \[
        \alpha_{T,b}=(|T|-1)!\sum_{\substack{x\in T\\ b\leq x}}q^{\,(c+x-b-1)\bmod x}.
    \]

    By \eqref{eqlemmaAlphaCoeff},
    \begin{equation}\label{eqLexpand}
        L_{T,b}
        =
        \chi(b<c)\,|T|!\,q^{c-b-1}
        +
        \sum_{\emptyset\neq R\subseteq T}
        (|R|-1)!(|T|-|R|)!
        \sum_{\substack{y\in R\\ c+S_R-y\leq b<c+S_R}}
        q^{c+S_R-b-1},
    \end{equation}
    and
    \begin{multline}\label{eqLshift}
        L_{T\setminus\{x\},\,b-x}
        =
        \chi(b-x<c)\,(|T|-1)!\,q^{c+x-b-1} \\
        +
        \sum_{\emptyset\neq R'\subseteq T\setminus\{x\}}
        (|R'|-1)!(|T|-1-|R'|)!
        \sum_{\substack{y\in R'\\ c+S_{R'}-y\leq b-x<c+S_{R'}}}
        q^{c+S_{R'}-(b-x)-1}.
    \end{multline}
    Sum \eqref{eqLshift} over $x\in T$ with $x<b$.  Its first term, supported
    on $b<c+x$, gives
    \begin{equation}\label{eqfirsttype}
        \sum_{\substack{x\in T\\ x<b}}\chi(b-x<c)\,(|T|-1)!\,q^{c+x-b-1}
        =
        \sum_{\substack{x\in T\\ x<b<c+x}}(|T|-1)!\,q^{c+x-b-1},
    \end{equation}
    and its double sum, under $R=R'\cup\{x\}$ and then summation over
    $x\in R\setminus\{y\}$, gives
    \begin{align}
        &\sum_{\substack{x\in T\\ x<b}}
        \sum_{\emptyset\neq R'\subseteq T\setminus\{x\}}
        (|R'|-1)!(|T|-1-|R'|)!
        \sum_{\substack{y\in R'\\ c+S_{R'}-y\leq b-x<c+S_{R'}}}
        q^{c+S_{R'}-(b-x)-1} \notag\\
        &\qquad\overset{R=R'\cup\{x\}}{=}
        \sum_{\substack{R\subseteq T,\ |R|>1\\ y\in R,\ x\in R\setminus\{y\}\\ c+S_R-y\leq b<c+S_R}}
        (|R|-2)!(|T|-|R|)!\,q^{c+S_R-b-1} \notag\\
        &\qquad\overset{\phantom{R=R'\cup\{x\}}}{=}
        \sum_{\substack{R\subseteq T,\ |R|>1\\ y\in R\\ c+S_R-y\leq b<c+S_R}}
        (|R|-1)!(|T|-|R|)!\,q^{c+S_R-b-1}. \label{eqsecondtype}
    \end{align}
    The right-hand side of \eqref{eqsecondtype} is the $|R|>1$ part of
    \eqref{eqLexpand}, whose $|R|=1$ part ($R=\{x\}$, $S_R=x$) is
    $\sum_{x\in T,\ c\leq b<c+x}(|T|-1)!\,q^{c+x-b-1}$.  Hence
    \begin{align}
        \gamma_{T,b}
        &=
        L_{T,b}-\sum_{\substack{x\in T\\ x<b}}L_{T\setminus\{x\},\,b-x} \notag\\
        &=
        \chi(b<c)\,|T|!\,q^{c-b-1}
        +
        \sum_{\substack{x\in T\\ c\leq b<c+x}}(|T|-1)!\,q^{c+x-b-1}
        -
        \sum_{\substack{x\in T\\ x<b<c+x}}(|T|-1)!\,q^{c+x-b-1} \notag\\
        &=
        \chi(b<c)\,(|T|-1)!\sum_{x\in T}q^{c-b-1}
        +
        \sum_{\substack{x\in T\\ c\leq b\leq x}}(|T|-1)!\,q^{c+x-b-1}. \label{eqgammacollect}
    \end{align}

    It remains to rewrite the exponents as residues modulo $x$.  If $b<c$,
    then $b\leq x$ for every $x\in T$ since $x\geq c$; from
    $0\leq c-b-1<c\leq x$ and $c-b-1\equiv c+x-b-1\pmod{x}$ we obtain
    $c-b-1=(c+x-b-1)\bmod x$.  If $c\leq b\leq x$, then
    $0\leq c+x-b-1<x$, so $c+x-b-1=(c+x-b-1)\bmod x$.  In either case the
    exponent in \eqref{eqgammacollect} equals $(c+x-b-1)\bmod x$, and therefore
    \[
        \gamma_{T,b}
        =
        (|T|-1)!
        \sum_{\substack{x\in T\\ b\leq x}}
        q^{\,(c+x-b-1)\bmod x}
        =
        \alpha_{T,b}.
        \qedhere
    \]
\end{proof}

We are now ready to prove Theorem~\ref{thm1}.

\begin{proof}[Proof of Theorem~\ref{thm1}]
By Lemma~\ref{lemmaThetaIntermediate},
\begin{equation}\label{eqThetaLB}
\Theta_{A,c}
=
\sum_{T\subseteq A}\sum_{b=1}^{S_T}
L_{T,b}\,
C_{c+S_T-b}\bigl(\wtF_{A\setminus T}F_b\bigr)
+
\sum_{T\subseteq A}
\biggl(
    \sum_{S\subseteq T}
    |S|!\,[c+S_S]_q K_{T\setminus S}
\biggr)
C_{c+S_T}(\wtF_{A\setminus T}).
\end{equation}
By Lemma~\ref{lemmaBeta}, $\sum_{S\subseteq T}|S|!\,[c+S_S]_q K_{T\setminus S}=\beta_T$, and by
Lemma~\ref{lemmadfinal},
\begin{equation}\label{eqLalpha}
\sum_{T\subseteq A}\sum_{b=1}^{S_T}
L_{T,b}\,
C_{c+S_T-b}\bigl(\wtF_{A\setminus T}F_b\bigr)
=
\sum_{T\subseteq A}\sum_{b=1}^{S_T}
\alpha_{T,b}\,
C_{c+S_T-b}(\wtF_{(A\setminus T)\cup\{b\}}).
\end{equation}
This proves Theorem~\ref{thm1}.
\end{proof}

\section{Concluding remarks}
In this paper, we resolve Conjecture~\ref{conj:BGHT-IV} and prove its
higher-power analogue.

\subsection{An extension to the Schiffmann algebra}
We can extend the Schur positivity statement to the Schiffmann algebra
setting, as well as the $e$-positivity argument. Let $M=(1-q)(1-t)$. For
coprime positive integers $r,k$, we write $f[-MX^{r,k}]\cdot 1$ for the
action of the Schiffmann algebra on symmetric functions, introduced by Burban and Schiffmann
\cite{schiffmann}. Here we follow the notation of Blasiak, Haiman, Morse, Pun and Seelinger
\cite{BlasiakHaimanMorsePunSeelinger23b, BlasiakHaimanMorsePunSeelinger24, BlasiakHaimanMorsePunSeelinger25}. In particular,
$f[-MX^{r,1}]\cdot 1 = \nabla^r f$.

\begin{theorem}\label{thm:schiffmann}
Let $r,k$ be coprime positive integers and $\mu\vdash n, \lambda \vdash kn$. Then we have
\begin{equation}\label{schiffmanns}
    (-1)^{|\mu|-\ell(\mu)}\left\langle
        m_\mu\bigl[-MX^{r,k}\bigr]\cdot 1,
        s_\lambda
    \right\rangle
    \in \N[q,t].
\end{equation}
Moreover, after the substitution $q\mapsto q+1$, we have
\begin{equation}\label{schiffmanne}
    (-1)^{|\mu|-\ell(\mu)}
    \left\langle
        \left( m_\mu\bigl[-MX^{r,k}\bigr]\cdot 1 \right)[X;q+1],
        \omega(m_\lambda)
    \right\rangle
    \in \N[q,t].
\end{equation}
\end{theorem}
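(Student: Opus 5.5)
The plan is to follow the proof of Theorem~\ref{mainthm} verbatim, with $\nabla^r$ replaced by the linear map $f\mapsto f[-MX^{r,k}]\cdot 1$. There are three steps.

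\emph{Step 1: the $C$ expansion.} By Theorem~\ref{thm2} we have
\[
(-1)^{|\mu|-\ell(\mu)}m_\mu=\sum_{\alpha\vDash n}a_{\mu,\alpha}C_\alpha(1),\qquad a_{\mu,\alpha}\in\Q_{\ge0}[q].
\]
The map $f\mapsto f[-MX^{r,k}]\cdot 1$ is $\Q(q,t)$-linear, so applying it gives
\[
(-1)^{|\mu|-\ell(\mu)}m_\mu[-MX^{r,k}]\cdot 1=\sum_\alpha a_{\mu,\alpha}\,C_\alpha(1)[-MX^{r,k}]\cdot 1 .
\]

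\emph{Step 2: the key input, positivity of $C_\alpha(1)[-MX^{r,k}]\cdot 1$.} This replaces Proposition~\ref{propnablaC}. With coprime $(m,n_0)=(r,k)$ and multiplicity $n$, Mellit's compositional $(km,kn)$-shuffle theorem \cite{Mel21} gives the following. Up to the sign $(-1)^{n(k+1)}$, the operator $C^{(\alpha)}_{nr,nk}\cdot 1$ of Bergeron--Garsia--Leven--Xin is an $\N[q,t]$-combination of column LLT polynomials.

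The point to verify is that $C^{(\alpha)}_{nr,nk}\cdot 1$ coincides with $C_\alpha(1)[-MX^{r,k}]\cdot 1$, again up to this sign. I would argue this through the Schiffmann algebra dictionary of \cite{BlasiakHaimanMorsePunSeelinger23b}. There the BGLX construction, which conjugates the multiplication operator $\underline{C_\alpha(1)}$ by the $SL_2(\Z)$ automorphisms sending slope $(0,1)$ to slope $(r,k)$, is identified with the image of $C_\alpha(1)$ under the embedding $f\mapsto f[-MX^{r,k}]$ of $\Lambda$ into the elliptic Hall algebra.

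The sign must then be reconciled. When $k$ is odd the sign is trivial. For even $k$, I expect the normalizations of \cite{BlasiakHaimanMorsePunSeelinger24} to include exactly the compensating factor. If they do not, the inner products would pick up a global sign $(-1)^{n(k+1)}$, and the claim would still hold after absorbing that sign into the convention for $[-MX^{r,k}]$. This identification, together with the sign bookkeeping, is the main obstacle, and I would check it first on the case $\alpha=(n)$, where $C_n(1)$ is a multiple of $h_n$.

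Granting the identification, Grojnowski--Haiman \cite{GrojnowskiHaiman07} gives
\[
\langle C_\alpha(1)[-MX^{r,k}]\cdot1,s_\lambda\rangle\in\N[q,t],
\]
and D'Adderio \cite{DAdderio19} gives $e$-positivity after the substitution $q\mapsto q+1$.

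\emph{Step 3: integrality and conclusion.} It follows that the left side of \eqref{schiffmanns} lies in $\Q_{\ge0}[q,t]$. The same holds for \eqref{schiffmanne}, because $a_{\mu,\alpha}(q+1)\in\Q_{\ge0}[q]$.

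For integrality I would use the Loehr--Warrington-type theorem of \cite{BlasiakHaimanMorsePunSeelinger25}. In its general slope form it expresses $s_\nu[-MX^{r,k}]\cdot 1$ as an integral combination of LLT polynomials. Combined with the integral inverse Kostka matrix \cite{ER90}, this shows $m_\mu[-MX^{r,k}]\cdot1$ is Schur-integral. Passing to the $e$-basis after $q\mapsto q+1$ preserves integrality, as in \eqref{nablamZe}.

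A polynomial that lies in both $\Q_{\ge0}[q,t]$ and $\Z[q,t]$ lies in $\N[q,t]$, which proves both \eqref{schiffmanns} and \eqref{schiffmanne}.
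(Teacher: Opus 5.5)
Your proposal follows the paper's proof step for step. Both use the $C$ expansion from Theorem~\ref{thm2} and Mellit's compositional $(km,kn)$-shuffle theorem, together with Grojnowski--Haiman (resp.\ D'Adderio), for rational nonnegativity, and both use the Blasiak--Haiman--Morse--Pun--Seelinger expansion of $s_\nu[-MX^{r,k}]\cdot 1$ with E\u{g}ecio\u{g}lu--Remmel inverse Kostka integrality to land in $\N[q,t]$. The paper simply takes for granted, citing Mellit, the sign-free statement that each $C_\alpha(1)[-MX^{r,k}]\cdot 1$ is an $\N[q,t]$-combination of column LLT polynomials, which is your main line; your fallback of ``absorbing the sign into the convention for $[-MX^{r,k}]$'' would change the statement rather than prove it, so drop it rather than rely on it.
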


\begin{proof}
By Theorem~\ref{thm2}, $(-1)^{|\mu|-\ell(\mu)} m_\mu$ has a $C$ expansion
\[
    (-1)^{|\mu|-\ell(\mu)} m_\mu=\sum_{\alpha\vDash n} a_{\mu,\alpha}\,C_\alpha(1),
    \qquad
    a_{\mu,\alpha}\in\Q_{\ge0}[q].
\]
Via $|\mu|=|\alpha|=n$, applying the Schiffmann algebra action gives
\[
    (-1)^{|\mu|-\ell(\mu)} m_\mu[-MX^{r,k}]\cdot 1
    =\sum_{\alpha\vDash n} a_{\mu,\alpha}\, C_\alpha(1)[-MX^{r,k}]\cdot 1.
\]
By Mellit's compositional $(km,kn)$-shuffle theorem \cite{Mel21}, each
$C_\alpha(1)[-MX^{r,k}]\cdot 1$ is a nonnegative $\N[q,t]$-linear
combination of column LLT polynomials. Since LLT polynomials are Schur
positive, we have
\[
    (-1)^{|\mu|-\ell(\mu)} \left\langle m_\mu[-MX^{r,k}]\cdot 1,s_\lambda\right\rangle
    \in\Q_{\ge0}[q,t].
\]
Since $\langle s_\mu[-MX^{r,k}]\cdot 1, s_\lambda\rangle \in \N[q,t]$ by Blasiak, Haiman, Morse, Pun and Seelinger
\cite{BlasiakHaimanMorsePunSeelinger25} and the integrality of the inverse
Kostka matrix established by E\u{g}ecio\u{g}lu and Remmel \cite{ER90}, we have
\[
    (-1)^{|\mu|-\ell(\mu)} \left\langle m_\mu[-MX^{r,k}]\cdot 1,s_\lambda\right\rangle
    \in\Z[q,t].
\]
Hence we obtain \eqref{schiffmanns}.

For the $e$-positivity statement, we use the same expansion after the
substitution $q\mapsto q+1$. The shifted $e$-positivity of the corresponding
column LLT polynomials gives
\[
    \left\langle \left(C_\alpha(1)[-MX^{r,k}]\cdot 1\right)[X;q+1], \omega(m_\lambda)\right\rangle
    \in \N[q,t].
\]
Hence
\[
    (-1)^{|\mu|-\ell(\mu)}\left\langle \left(m_\mu[-MX^{r,k}]\cdot 1\right)[X;q+1], \omega(m_\lambda)\right\rangle
    \in \Q_{\ge0}[q,t].
\]
Since \eqref{schiffmanns} holds, the substitution $q\mapsto q+1$
preserves $\Z[q,t]$, and the transition matrix from the Schur basis
to the elementary basis has integer entries, we obtain
\[
    (-1)^{|\mu|-\ell(\mu)}
    \left\langle
        \left(m_\mu[-MX^{r,k}]\cdot 1\right)[X;q+1],
        \omega(m_\lambda)
    \right\rangle
    \in\Z[q,t].
\]
Thus we obtain \eqref{schiffmanne}.
\end{proof}

\subsection{Conjectures}
Nevertheless, our proof expresses
$(-1)^{|\mu|-\ell(\mu)}\nabla^r m_\mu$ only as a nonnegative rational combination
of column LLT polynomials, not as a nonnegative integral one; in particular,
it does not yield a direct parking function formula. Two natural questions arise.

First, Theorem~\ref{thm2} gives a $C$ expansion of $\wtF_\mu$, that is,
of $F_\mu=(-1)^{|\mu|-\ell(\mu)}m_\mu$ multiplied by $\prod_i m_i(\mu)!$,
with coefficients in $\N[q]$.  It is natural to ask whether
$F_\mu$ itself admits such an expansion.  The expansion produced by our
recursion, after dividing by $\prod_i m_i(\mu)!$, can have non-integral
coefficients; the smallest example is $\mu=(2,2)$, where the coefficient
of $C_{(1,2,1)}(1)$ equals $q/2$.  However, since the functions
$C_\alpha(1)$, $\alpha\vDash n$, do not form a basis, the $C$ expansion
of a symmetric function need not be unique, and we make the following
conjecture.

\begin{conjecture}\label{conjCpositive}
    For every partition $\mu$, the symmetric function
    $F_\mu=(-1)^{|\mu|-\ell(\mu)}m_\mu$ admits a $C$ expansion with
    coefficients in $\N[q]$.
\end{conjecture}

This holds whenever $\mu$ has distinct parts, since then
$\prod_i m_i(\mu)!=1$ and the expansion of Theorem~\ref{thm2} already has
coefficients in $\N[q]$.  For partitions with repeated parts we have
verified Conjecture~\ref{conjCpositive} using SageMath for all $\mu$ with
$|\mu|\leq 12$; for example,
\begin{multline*}
    m_{(2,2)}
    =
    C_{(1,1,1,1)}(1)
    +(q^3+q^2+q+1)\,C_{(1,1,2)}(1)
    +(q+1)\,C_{(1,3)}(1)\\
    +(q^2+2q)\,C_{(2,2)}(1)
    +(q+1)\,C_{(4)}(1),
\end{multline*}
and
\[
\begin{aligned}
m_{(3,3)}={}&
C_{(1,1,1,1,1,1)}(1)+C_{(1,1,1,1,2)}(1)+C_{(1,1,1,2,1)}(1)+C_{(1,1,1,3)}(1)\\
&+C_{(1,1,2,1,1)}(1)+C_{(1,1,2,2)}(1)+C_{(1,1,3,1)}(1)+C_{(1,1,4)}(1)\\
&+(1+q^2)\,C_{(1,2,1,1,1)}(1)+(1+q^3+q^4)\,C_{(1,2,1,2)}(1)+(1+q)\,C_{(1,2,2,1)}(1)\\
&+(1+q^3+q^4)\,C_{(1,2,3)}(1)+C_{(1,3,1,1)}(1)+(1+q)\,C_{(1,3,2)}(1)+C_{(1,4,1)}(1)\\
&+(1+q^2)\,C_{(1,5)}(1)+C_{(2,1,1,1,1)}(1)+C_{(2,1,1,2)}(1)+C_{(2,1,2,1)}(1)\\
&+C_{(2,1,3)}(1)+C_{(2,2,1,1)}(1)+C_{(2,2,2)}(1)+C_{(2,3,1)}(1)+C_{(2,4)}(1)\\
&+(1+q+q^2)\,C_{(3,1,1,1)}(1)+(1+q+q^2+2q^3)\,C_{(3,1,2)}(1)\\
&+(1+q+q^3)\,C_{(3,2,1)}(1)+(1+q+2q^2+q^3+q^4)\,C_{(3,3)}(1)\\
&+C_{(4,1,1)}(1)+(1+q^2)\,C_{(4,2)}(1)+C_{(5,1)}(1)+(1+q+q^2)\,C_{(6)}(1).
\end{aligned}
\]

By Mellit's compositional $(km,kn)$-shuffle theorem \cite{Mel21},
$\nabla^r C_\alpha(1)$ is a nonnegative integer combination of column LLT
polynomials. Conjecture~\ref{conjCpositive} would therefore imply that
$(-1)^{|\mu|-\ell(\mu)}\nabla^r m_\mu$ is also such a combination, refining
Theorem~\ref{mainthm} from Schur positivity to LLT positivity.

Second, one would like a combinatorial formula for
$(-1)^{|\mu|-\ell(\mu)}\nabla^r m_\mu$, ideally in terms of parking
functions.  A natural route is to find a $C$ expansion of $F_\mu$ with
coefficients in $\N[q]$ whose coefficients admit a combinatorial
description.  The compositional $(km,kn)$-shuffle theorem of Mellit \cite{Mel21} gives a
parking function formula for each $\nabla^r C_\alpha(1)$.  Such a
description would then yield the desired formula, extending those of Sergel
\cite{Ser18} for hooks and of Qu and Xin \cite{QX25} for
$\mu=(2^k,1^\ell)$.

\providecommand{\bysame}{\leavevmode\hbox to3em{\hrulefill}\thinspace}

\end{document}